\newcommand{\Ab}{\mathbb A}
\newcommand{\Ac}{\mathcal A}
\newcommand{\Bb}{\mathbb B}
\newcommand{\Cb}{\mathbb C}
\newcommand{\Ec}{\mathcal E}
\newcommand{\Fb}{\mathbb F}
\newcommand{\Fc}{\mathcal F}
\newcommand{\Hc}{\mathcal H}
\newcommand{\Kc}{\mathcal K}
\newcommand{\Nb}{\mathbb N}
\newcommand{\Nc}{\mathcal N}
\newcommand{\Rc}{\mathcal R}
\newcommand{\Sc}{\mathcal S}
\newcommand{\Tc}{\mathcal T}
\newcommand{\Wc}{\mathcal K}
\newcommand{\Wh}{\mathcal W}
\newcommand{\Zb}{\mathbb Z}
\newcommand{\inv}{^{-1}}
\newcommand{\epm}{^{\pm 1}}
\newcommand{\lab}{\mathcal L}
\newcommand{\la}{\langle}
\newcommand{\ra}{\rangle}
\DeclareMathOperator{\Aut}{Aut}
\DeclareMathOperator{\Out}{Out}
\DeclareMathOperator{\Inn}{Inn}
\DeclareMathOperator{\id}{Id}
\DeclareMathOperator{\Norm}{N}
\DeclareMathOperator{\Centr}{Z}
\DeclareMathOperator{\Mod}{Mod}
\DeclareMathOperator{\image}{im}
\newtheorem{theorem}{Theorem}[section]
\newtheorem{proposition}[theorem]{Proposition}
\newtheorem{lemma}[theorem]{Lemma}
\newtheorem{corollary}[theorem]{Corollary}
\theoremstyle{remark}
\newtheorem*{remark}{Remark}
\newtheorem{definition}{Definition}
\title{The automorphism group of accessible groups}
\author{Mathieu Carette}
\address{D\'epartement de math\'ematiques \\ Universit\'e Libre de Bruxelles ULB CP 216\\ Boulevard du triomphe \\ 1050 Brussels\\ Belgium}
\email{mcarette@ulb.ac.be}
\thanks{The author is a FNRS Research Fellow (Belgium). Part of this research was supported by a Marie Curie fellowship.}
\begin{document}
	
	\begin{abstract}
		In this article, we study the outer automorphism group of a group $G$ decomposed as a finite graph of groups with finite edge groups and finitely generated vertex groups with at most one end. We show that $\Out(G)$ is essentially obtained by taking extensions of relative automorphism groups of vertex groups, groups of Dehn twists and groups of automorphisms of free products. We apply this description and obtain a criterion for $\Out(G)$ to be finitely presented, as well as a necessary and sufficient condition for $\Out(G)$ to be finite. Consequences for hyperbolic groups are discussed.
	\end{abstract}
	
	\maketitle
	
	\section{Introduction}
		
		In this article, we study the structure of the automorphism group of accessible groups. A group is called \emph{accessible} if it admits a decomposition as a finite graph of groups with finite edge groups and finitely generated vertex groups with at most one end. Recall that finitely presented groups are accessible \cite{Dunwoody}, as well as finitely generated groups with a uniform bound on the order of finite subgroups \cite{Linnell}. We reduce the study of the automorphism group of an accessible group to that of certain relative automorphism groups of vertex groups, groups of Dehn twists and automorphism groups of free products. We briefly discuss these three classes of groups.
		
		We recall the notion of relative automorphism groups. Let $G$ be a group, and let $\Hc$ be a family of subgroups of $G$, closed under taking conjugates. The relative automorphism group $\Out_\Hc(G)$ is the subgroup of $\Out(G)$ of those outer automorphisms that preserve the conjugacy class of each element of $\Hc$. This is an algebraic analogue of the following geometric situation: $S$ is an orientable surface with boundary, and $\Hc$ is the family of cyclic subgroups of $G=\pi_1(S)$ corresponding to boundary curves, then $\Out_\Hc(G)$ is exactly the mapping class group $\Mod^\partial(S)$ preserving the boundary componentwise.
		
		A Dehn twist is a natural generalization to any splitting of the corresponding notion in surfaces. The general definition of a Dehn twist is deferred to the next section, see Definition~\ref{defn:Dehn_twist}. Groups generated by Dehn twists of a given splitting $\Ab$ occur naturally when studying the group $\Aut^\Ab(G)$ of automorphisms of a group $G$ preserving a graph of group decomposition $\Ab$. The structure of such groups of automorphisms has been well studied, see for example \cite{Levitt}.
		
		Suppose $G$ is a free product of finitely many freely indecomposable groups. Fouxe-Rabinovitch \cite{FR1,FR2} gave a presentation of $\Aut(G)$ in terms of the freely indecomposable free factors of $G$ and their automorphism group. In particular he shows that if each freely indecomposable free factor of $G$ is finitely presented and has a finitely presented automorphism group, then $\Aut(G)$ is finitely presented. Gilbert \cite{Gilb} obtained the same result using peak-reduction methods, which we adapt to our setting. %Presentations for the subgroups of the automorphism group of a free product $\Wc_\Fb$ in the main theorem can be found using peak-reduction methods as in \cite{Gilb}. %Note that peak reduction methods have also been used by Krsti\'c \cite{Krstic} to show that the automorphism group of a finitely generated virtually free group is finitely presented. 
		
		Our study of the structure of the automorphism group of an accessible group $G$ combining the three classes of groups above yield the following consequence for the finite presentability of $\Out(G)$.
	\begin{theorem} \label{theorem:out_fp}
		Let $\Ab$ be a finite reduced graph of groups with finite edge groups and finitely generated vertex groups with at most one end. Let $G=\pi_1(\Ab)$ act on the Bass-Serre tree $T_A$.
		Suppose the following two conditions are satisfied:
		\begin{enumerate}
			\item \label{norm_fp_center_fg} For each edge $e$ of $T_A$ and each vertex $v$ stabilized by $G_e$, the normalizer $N_e$ of $G_e$ in $G_v$ is finitely presented, and its center $\Centr(N_e)$ is finitely generated.
			\item For each vertex $v$ of $T_A$, the group $\Out_{\Hc_v}(G_v)$ of automorphisms relative to the family $\Hc_v$ of edge stabilizers contained in $G_v$ is finitely presented. \label{vertex_gp_fp}
		\end{enumerate}
		Then $\Out(G)$ is finitely presented.
	\end{theorem}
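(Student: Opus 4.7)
The plan is to decompose $\Out(G)$ via two nested short exact sequences, each of whose kernel and quotient is finitely presented by the hypotheses; since an extension of a finitely presented group by a finitely presented group is itself finitely presented, this will establish the theorem. The three classes of groups appearing will be the relative vertex-group automorphism groups $\Out_{\Hc_v}(G_v)$, the Dehn twist group of $\Ab$, and an automorphism group of a free product in the sense of Fouxe-Rabinovitch and Gilbert, as announced in the abstract.

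Let $\Out^{\Ab}(G) \leq \Out(G)$ denote the subgroup of outer automorphisms preserving the splitting $\Ab$. Following the analysis of graph-of-groups automorphisms of Levitt cited in the introduction, I would use a short exact sequence
\begin{equation*}
1 \to \Tc(\Ab) \to \Out^{\Ab}(G) \to P \to 1,
\end{equation*}
where $\Tc(\Ab)$ is the Dehn twist group and $P$ is a finite extension of the finite product $\prod_{[v]} \Out_{\Hc_v}(G_v)$. Hypothesis \ref{vertex_gp_fp} renders $P$ finitely presented. For the kernel, each Dehn twist along an edge $e$ is represented by an element centralizing $G_e$ in an adjacent vertex group; since $G_e$ is finite, this centralizer has finite index in $N_e$. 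The standard presentation of $\Tc(\Ab)$ as a quotient of a direct product of such centralizers, by relations arising from $\Centr(N_e)$, then combines with hypothesis \ref{norm_fp_center_fg} to show that $\Tc(\Ab)$ is finitely presented.

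Next, analyze the quotient $\Out(G)/\Out^{\Ab}(G)$, which records the action of $\Out(G)$ on the deformation space of splittings of $G$ with the same edge and vertex data as $\Ab$. Because every vertex group of $\Ab$ is one-ended, these vertex groups represent, up to conjugation, the maximal one-ended subgroups of $G$, and hence are preserved setwise (up to conjugation) by all of $\Out(G)$. This identifies $\Out(G)/\Out^{\Ab}(G)$ with a finite extension of the automorphism group of the free product obtained by collapsing the one-ended vertex groups. Adapting Gilbert's peak-reduction argument, in the form announced in the free-product discussion of the introduction, this automorphism group of a free product is finitely presented as soon as each relative vertex-group automorphism group is, which is again hypothesis \ref{vertex_gp_fp}. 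Combining the two short exact sequences completes the proof.

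The main obstacle, and the technical heart of the argument, is the last step: identifying $\Out(G)/\Out^{\Ab}(G)$ with an automorphism group of a free product and establishing its finite presentation by peak reduction. The classical Fouxe-Rabinovitch and Gilbert results cover ordinary free products, but here one must track the compatibility of finite edge groups with the free-product structure and with the Dehn twist group. The careful adaptation of peak reduction to the graph-of-groups setting with finite edge groups is where most of the paper's work will reside.
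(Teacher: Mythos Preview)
Your three building blocks are the right ones, but the way you nest them is inverted, and this is a genuine gap. Since the vertex groups of $\Ab$ are one-ended, the splitting is canonical: every automorphism of $G$ preserves the family of vertex and edge stabilizers (this is the paper's Lemma~\ref{lemma:edge_stab_canonical}), so $\Out^{\Ab}(G)$ already has finite index in $\Out(G)$ and there is no nontrivial outer quotient to identify with a free-product automorphism group. The Whitehead-type automorphisms you are trying to place in $\Out(G)/\Out^{\Ab}(G)$ in fact act on each vertex stabilizer by conjugation, so they live \emph{inside} the kernel of your map $\Out^{\Ab}(G)\to P$; that kernel is therefore strictly larger than the Dehn twist group $\Tc(\Ab)$. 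The correct filtration is the one the paper uses: first map $\Out^0(G)$ to $\prod_v \Out_{\Hc_v}(G_v)$ with kernel $\Wc_\Bb$, and then analyse $\Wc_\Bb$ via a (virtually exact) sequence $D\hookrightarrow \Wc_\Bb \to \Wc_\Fb$ with $D$ the Dehn twist piece and $\Wc_\Fb$ the free-product piece.

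A second missing idea is the induction. The free product does not arise by ``collapsing the one-ended vertex groups'' of $\Ab$; with nontrivial finite edge groups that operation does not produce a free product. The paper first passes to a collapsed graph of groups $\Bb$ in which all edge stabilizers are conjugate to a single minimal one $G_0$, and the relevant free product is $F=N/G_0$ where $N=\Norm_G(G_0)$. Finite presentability of the vertex pieces $\Out_{\Hc_v}(G_v)$ for $v\in VB$ is then obtained by induction on the number of edges, applied to the smaller graphs of groups $\Ab(v)$ collapsed to each vertex of $\Bb$. Without this inductive reduction to a single edge-group conjugacy class, neither the description of the Dehn twist kernel (Lemma~\ref{lemma:ker_sigma}) nor the identification of $\Wc_\Fb$ with an honest free-product automorphism group (Theorem~\ref{theorem:F-R}) goes through.
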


	Our study also leads to a characterization of accessible groups with finite outer automorphism group in terms of splittings over finite groups and relative automorphism groups of maximal elliptic subgroups. 
		\begin{theorem} \label{theorem:finite_out}
			Let $\Ab$ be a finite reduced graph of groups with finite edge groups and finitely generated vertex groups with at most one end. Let $G = \pi_1(\Ab)$ act on the Bass-Serre tree $T_A$. Then $\Out(G)$ is infinite if and only if one of the following holds:
			\begin{enumerate}
				\item There is a vertex stabilizer $G_v$ of $T_A$ such that $\Out_{\Hc_v}(G_v)$ is infinite, where $\Hc_v$ is the family of edge stabilizers contained in $G_v$;
				\item There is a splitting of $G$ as an amalgam $A \ast_C B$ over a finite group with $B \neq C$ such that the center of $A$ has infinite index in the centralizer of $C$ in $A$; 
				\item There is a splitting of $G$ as an HNN extension $A \ast_C$ over a finite group such that the centralizer of $\tilde C$ in $A$ is infinite, where $\tilde C$ is one of the two isomorphic copies of $C$ in $A$ given by the HNN extension. 
			\end{enumerate}
		\end{theorem}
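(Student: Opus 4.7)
The plan is to derive this theorem from the structural description of $\Out(G)$ developed earlier in the paper, which realizes $\Out(G)$, up to finite index, as a tower of extensions whose successive factors are: a product of relative automorphism groups $\Out_{\Hc_v}(G_v)$ of the vertex groups, a group of Dehn twists supported on the edges of the Bass-Serre tree $T_A$, and a Fouxe-Rabinovitch type automorphism group of the free product decomposition of $G$ that survives after collapsing all edges with non-trivial stabilizer.

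The implication that each of the three listed conditions forces $\Out(G)$ to be infinite is the easier direction, which I would handle case by case. If $\Out_{\Hc_v}(G_v)$ is infinite, I would extend an outer automorphism of $G_v$ preserving the conjugacy classes of incident edge stabilizers by the identity on a fundamental domain of the remainder of $\Ab$; this defines a homomorphism $\Out_{\Hc_v}(G_v) \to \Out(G)$ with finite kernel, hence infinite image. If a splitting $G = A \ast_C B$ as in condition~2 exists, I would consider the Dehn twist by $z \in \Centr_A(C)$, acting as the identity on $A$ and as conjugation by $z$ on $B$; using that $C$ is finite and $B \neq C$, a normal form argument in the amalgam shows that this Dehn twist is inner exactly when $z$ lies in $\Centr(A)$ up to a finite subgroup, so the infinite quotient $\Centr_A(C)/\Centr(A)$ maps into $\Out(G)$ with finite kernel. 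Condition~3 is handled by the analogous Dehn twist construction along the HNN edge.

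For the converse, assuming all three conditions fail, I would verify that each factor of the structural description is finite. The failure of condition~1 is exactly the finiteness of the vertex factor. For the Dehn twist factor attached to an edge orbit $[e]$ of $T_A$, I would collapse $T_A$ along all other edge orbits to realize $G$ as an amalgam or HNN splitting over the finite group $G_e$; the failure of condition~2 or~3 applied to this splitting forces $\Centr_{G_v}(G_e)/\Centr(G_v)$ to be finite at each adjacent vertex, which bounds the contribution of $[e]$ to the Dehn twist factor, and summing over the finitely many edge orbits yields finiteness of the whole factor. Finiteness of the Fouxe-Rabinovitch factor then follows from Gilbert's theorem: the failure of condition~3 applied to a putative HNN splitting over the trivial subgroup rules out $\Zb$-summands in the free product decomposition, and the failure of condition~2 rules out infinite groups of partial conjugations.

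The main obstacle I anticipate is this converse direction, specifically the need to connect abstract splittings of $G$ over finite subgroups to the concrete factors of the structural description attached to the given $\Ab$. Since the conditions of the theorem quantify over splittings of $G$ that need not arise as refinements or collapses of $\Ab$, I will rely on the accessibility of $G$ and the rigidity of Stallings--Dunwoody decompositions over finite subgroups to move between decompositions and transfer the hypotheses, ensuring that the abstract conditions genuinely control the Dehn twist and Fouxe-Rabinovitch contributions appearing in the structure of $\Out(G)$.
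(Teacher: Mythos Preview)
Your strategy for the ``if'' direction is sound and matches the paper. The gap is in the converse, where you misdescribe the structural result the paper actually provides.

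The paper does \emph{not} give a single global tower over the original decomposition $\Ab$ with factors indexed by the vertices and edges of $\Ab$ and a Fouxe--Rabinovitch piece coming from ``collapsing all edges with non-trivial stabilizer.'' Proposition~\ref{proposition:main_tech} is an \emph{inductive step}: one chooses a minimal edge stabilizer $G_0$, collapses every edge of $\Ab$ whose stabilizer is not conjugate to $G_0$ to obtain a graph of groups $\Bb$ in which all edge groups are conjugate, and then produces two virtually exact sequences with pieces $\prod_{v\in V\Bb}\Out_{\Hc_v}(G_v)$, a Dehn-twist kernel $D=\ker\sigma^*$, and $\Wc_\Fb$. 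The vertex groups appearing in the top factor are those of $\Bb$, not of $\Ab$, and the free product $F$ is $N/G_0$ where $N=\Norm_G(G_0)$; it has nothing to do with collapsing the non-trivial edges of $\Ab$. The proof of the theorem then runs by induction on $|EA|$: each $\Bb$-vertex group $G_v$ carries a strictly smaller graph of groups $\Ab(v)$, and one lifts conditions (1)--(3) from $G_v$ to $G$.

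This matters for your argument in two places. First, your Dehn-twist factor ``attached to an edge orbit of $T_A$'' is not what appears: at each inductive stage one only sees the edges with stabilizer conjugate to the current $G_0$, and the computation of $\ker\sigma^*$ in Lemma~\ref{lemma:ker_sigma} involves centralizers $\Centr_G(N_k)$ of normalizers, not simply of edge groups; the resulting case analysis is over $\Bb$, not $\Ab$. Second, your handling of the Fouxe--Rabinovitch piece via ``Gilbert's theorem'' and ``ruling out $\Zb$-summands in the free product decomposition of $G$'' is off target: $\Wc_\Fb$ lives over $F=N/G_0$, whose free factors $F_j=N_j/G_0$ and $\Zb$-summands (indexed by $S$) bear no direct relation to free splittings of $G$ itself. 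The paper does not invoke a finiteness criterion for Fouxe--Rabinovitch groups; it handles infinite $\Wc_\Fb$ by a short case split on the valence of $b_0$ in $\Fb$ and then exhibits an explicit splitting of $G$ satisfying (2) or (3) by collapsing all but one edge of $\Bb$. As written, your proposal would need to be restructured around this induction to go through.
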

		A well-known theorem of Paulin \cite{Paulin}, combined with Rips' theory \cite{BestFeig}, implies that if $G$ is a one-ended hyperbolic group with $\Out(G)$ infinite, then $G$ splits over a $2$-ended subgroup. In fact Levitt \cite{Levitt} showed that a one-ended hyperbolic group $G$ has infinite outer automorphism group if and only if $G$ splits over a $2$-ended subgroup with infinite center, either as an arbitrary HNN-extension, or as an amalgam of groups with finite centers. Theorem~\ref{theorem:finite_out} allows us to drop the condition that $G$ is one-ended, yielding a characterization applying to all hyperbolic groups in Theorem~\ref{theorem:hyp_gp_infinite_out}. A refined version of Theorem~\ref{theorem:finite_out} has been obtained independently by Guirardel and Levitt \cite{GuirLev} for relatively hyperbolic groups. In particular, they recover Theorem~\ref{theorem:hyp_gp_infinite_out}.
		
		The paper is organized as follows. We introduce notation and make some preliminary observations in Section~\ref{sec:prelim}. The structure of the automorphism group of an accessible group is described in Section~\ref{sec:structure_out}, summarized by Proposition~\ref{proposition:main_tech} which is the main technical result of this paper. We apply our structural results inductively in Section~\ref{sec:finite_pres} to prove Theorem~\ref{theorem:out_fp}. Section~\ref{sec:finite_out} is devoted to the proof of Theorem~\ref{theorem:finite_out}. In Section~\ref{sec:hyperbolic_gps}, Theorems~\ref{theorem:out_fp} and~\ref{theorem:finite_out} are applied to show that the automorphism group of any hyperbolic group is finitely presented and to characterize hyperbolic groups with finite outer automorphism group.
	
	\section{Preliminaries} \label{sec:prelim}
		
		We denote the center of a group $G$ by $\Centr(G)$. If $H$ is a subgroup of $G$, we write $\Centr_G H$ for the centralizer of $H$ in $G$ and $\Norm_G H$ for the normalizer of $H$ in $G$. For an element $g \in G$, we define the associated inner automorphism as $i_g : G \to G : x \mapsto gxg\inv$.
		
		\begin{definition}\label{defn:Dehn_twist}
			Let $G$ be a group. An automorphism $\psi \in Aut(G)$ is called a \emph{Dehn twist} if one of the following holds.
			\begin{enumerate}
				\item $G$ splits as an amalgam $A \ast_C B$, and there is an element $z \in \Centr_A(C)$ such that $\psi|_A = \id|_A$ and such that $\psi|_B = i_z|_B$.
				\item $G$ splits as an HNN-extension $A \ast_C=\langle A,t| \Rc_A, t\omega(c)t\inv = \alpha(c) \text{ for } c \in C\rangle$, and there is an element $z \in \Centr_A(\alpha(C))$ such that $\psi|_A = \id|_A$ and $\psi(t) = zt$.
			\end{enumerate}
		\end{definition}%
		%Let $X$ denote $A \cup B$ in the first case and $A \cup \{t\}$ in the second. Observe that $X$ generates $G$ so the automorphism is uniquely defined by its restriction on $X$. Moreover, given any splitting and any $z$ as above, the map $\psi:X \to G$ defined as above always extend to an automorphism of $G$.
		%
		\subsection{Graphs of groups}
			In order to fix notation, we recall some definitions and results of Bass-Serre theory. The unfamiliar reader is referred to \cite{Serre} for more details.
			
			A \emph{graph} $A$ is given by the following data: a set of vertices $VA$, a set of edges $EA$, a boundary map $\alpha:EA \to VA$ and an involution \mbox{$\inv : EA \to EA$} such that $e\inv \neq e$ for each edge $e \in EA$. The vertex $\alpha(e)$ is called the \emph{initial vertex} of $e$. The \emph{terminal vertex} of $e$ is defined as $\omega(e) := \alpha(e\inv)$.
			
			A \emph{graph of groups} $\Ab$ is given by the following data: a connected graph $A$, a vertex group $\Ab_v$ for each vertex $v \in VA$, an edge group $\Ab_e$ for each edge $e \in EA$ such that $\Ab_e = \Ab_{e\inv}$ and injections $\alpha_e:\Ab_e \to \Ab_{\alpha(e)}$ of each edge group in the initial vertex group. Given such data, we also define the map $\omega_e :\Ab_e \to \Ab_{\omega(e)}$ by $\omega_e := \alpha(e\inv)$. An $\Ab$-path is a sequence $a_0,e_1,a_1,...,e_n,a_n$ where $a_i$ is an element of a vertex group $\Ab_{v_i}$ and $e_i$ is an edge of $\Cb$ such that $\omega(e_i) = v_i = \alpha(e_{i+1})$. Two $\Ab$-paths $\gamma_1$ and $\gamma_2$ are \emph{elementarily equivalent} if either
			\[ \gamma_1 = \gamma,a_i,e,1,e\inv,a_{i+2},\gamma' \text{ and }  \gamma_2 = \gamma,a_ia_{i+2},\gamma'\]
			or if
			\[ \gamma_1 = \gamma,a_i,e,a_{i+1},\gamma' \text{ and }  \gamma_2 = \gamma,a_i\alpha_e(c),e,\omega_e(c\inv)a_{i+1},\gamma' \text{ where } c \in \Ab_e \]
			Let $\sim$ denote the equivalence relation on the set of $\Ab$-paths generated by this elementary equivalence. If $u_0$ is a vertex of $A$, then the \emph{fundamental group} $\pi_1(\Ab,u_0) = \{\text{closed } \Ab\text{-paths based at } u_0 \}/\sim$ is a group with the operation of concatenation. The equivalence class of an $\Ab$-path $\gamma$ is denoted by $[\gamma]$. If $\Ab$ is a graph of groups, we use the same letter $A$ to denote the underlying graph and we let $T_A$ be the Bass-Serre tree on which the fundamental group $\pi_1(\Ab,u_0)$ acts. The isomorphism class of $\pi_1(\Ab,u_0)$ does not depend on the basepoint $u_0$, so we will often write $\pi_1(\Ab)$.
			
			All actions on trees are assumed not to invert edges. We say a group $G$ \emph{splits} over a subgroup $C$ if either $G = A \ast_C B$ with $A \neq C \neq B$ or if $G=A \ast_C$.
			
			Let $\Ab$ be a finite graph of groups. A subgroup $H$ of $\pi_1(\Ab)$ is \emph{elliptic} if it fixes a vertex in $T_A$. For a vertex or edge $x$ of $T_A$ we let $G_x$ be the stabilizer of $x$ in $G$. $\Ab$ is \emph{minimal} if $\Ab$ does not have any vertex $v$ of valence 1 such that the boundary monomorphism of the adjacent edge is surjective. $\Ab$ is \emph{reduced} if for any non-loop edge $e \in EA$, the boundary monomorphism $\alpha_e$ is not surjective. Note that a reduced graph of groups is automatically minimal.
			
		\subsection{Normalizers of elliptic subgroups} 
			
			\begin{lemma} \label{lemma:normvertexgp}
				Let $\Ab$ be a graph of groups, and $G_v$ be a vertex stabilizer in $G=\pi_1(\Ab)$ that does not stabilize an edge of $T_A$. Then $G_v$ is its own normalizer in $G$.
			\end{lemma}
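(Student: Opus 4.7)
The plan is to show that any $g \in G$ normalizing $G_v$ must itself fix the vertex $v$ in the Bass-Serre tree $T_A$. Suppose $g \in \Norm_G(G_v)$, so $gG_vg\inv = G_v$. For any $h \in G_v$, the element $ghg\inv$ fixes $v$. Rewriting, this says $h$ fixes $g\inv \cdot v$, or equivalently that $ghg\inv$ fixes $g \cdot v$. Thus $G_v \subseteq G_{g\cdot v}$, so the subgroup $G_v$ fixes both vertices $v$ and $g \cdot v$ of $T_A$.

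Now I would use the fact that $T_A$ is a tree on which $G$ acts without edge inversions. Any subgroup fixing two distinct vertices $v$ and $g\cdot v$ of a tree must fix the entire geodesic segment joining them pointwise, and in particular must fix every edge along that path. If $v \neq g \cdot v$, this geodesic contains at least one edge $e$, and we obtain $G_v \subseteq G_e$, so $G_v$ stabilizes an edge of $T_A$, contradicting the hypothesis. Therefore $v = g \cdot v$, i.e.\ $g \in G_v$. The reverse inclusion $G_v \subseteq \Norm_G(G_v)$ is automatic, completing the proof.

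I do not anticipate any real obstacle: the argument is a standard application of Bass--Serre theory. The only mild subtlety is justifying that a subgroup fixing two vertices of $T_A$ fixes the connecting geodesic pointwise, but this is immediate from the convention recalled in the excerpt that actions on trees do not invert edges (otherwise the endpoints of an edge could be swapped while the edge itself is only stabilized setwise). Thus the lemma follows quickly once one translates the normalizer condition into a statement about fixed vertices in $T_A$.
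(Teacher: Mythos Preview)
Your proof is correct and follows essentially the same approach as the paper's. The paper phrases the argument slightly more concisely---observing that $\Norm_G G_v$ acts on the fixed-point set of $G_v$ in $T_A$, which by hypothesis consists of $\{v\}$ alone---but your version simply unpacks this same idea element by element.
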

			\begin{proof}
				The normalizer $\Norm_G G_v$ acts on the set of fixed points of $G_v$ in $T_A$, which only consists of the vertex $v$, thus showing that $\Norm_G G_v \subset G_v$.
			\end{proof}
			
			The following lemma computes the normalizer of an elliptic subgroup of $G=\pi_1(\Ab)$ from normalizers in vertex groups.
			\begin{lemma} \label{lemma:norm}
				Let $\Ab$ be a finite graph of groups with finite edge groups. Let $H$ be an elliptic subgroup of $G=\pi_1(\Ab)$.
				\begin{enumerate}
					\item If $\Norm_{G_v}H$ is finitely generated (resp. presented) for every vertex $v \in T_A$ fixed by $H$  then $\Norm_G H$ is finitely generated (resp. presented).
					\item If the center of $\Norm_{G_v}H$ is finitely generated for each $v\in T_A$ fixed by $H$, then the center of $\Norm_G H$ is finitely generated.
				\end{enumerate}
			\end{lemma}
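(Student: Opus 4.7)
The plan is to apply Bass-Serre theory to the action of $\Norm_G H$ on the fixed-point subtree $T_A^H \subseteq T_A$, which is non-empty since $H$ is elliptic. The $\Norm_G H$-stabilizer of a vertex $w$ (resp.\ edge $e$) of $T_A^H$ is $\Norm_{G_w} H$ (resp.\ $\Norm_{G_e} H$), and the latter is finite because it lies in the finite edge group $G_e$. Granting that the quotient $\Norm_G H \backslash T_A^H$ is a finite graph, $\Norm_G H$ is the fundamental group of a finite graph of groups with vertex groups $\Norm_{G_w} H$ and finite edge groups, and part~(1) then follows from standard Bass-Serre theory, since finite groups are finitely presented.

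The main obstacle is therefore proving that $\Norm_G H \backslash T_A^H$ is finite. For edges, the key observation is that for each $G$-orbit of edges in $T_A$ with representative $e_0$, the $\Norm_G H$-orbits of edges of $T_A^H$ lying in that $G$-orbit biject with the $G_{e_0}$-conjugacy classes of $G$-conjugates of $H$ contained in the finite group $G_{e_0}$; hence only finitely many such orbits arise per $G$-orbit, and summing over the finitely many $G$-orbits of edges of $T_A$ yields finiteness. If $T_A^H$ contains at least one edge, connectedness forces every vertex of $T_A^H$ to be an endpoint of some edge, so the number of $\Norm_G H$-orbits on vertices is also finite. In the remaining case $T_A^H$ reduces to a single vertex $v$, whence $\Norm_G H = \Norm_{G_v} H$ and both conclusions are immediate.

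For part~(2), if $\Norm_G H$ has a global fixed point $v \in T_A^H$ then $\Norm_G H = \Norm_{G_v} H$ and the statement follows from the hypothesis. Otherwise $\Norm_G H$ admits a minimal invariant subtree $T_0 \subseteq T_A^H$, which necessarily contains an edge. A central element $z \in \Centr(\Norm_G H)$ commutes with every element of $\Norm_G H$, so its fixed-point set in $T_0$ is $\Norm_G H$-invariant; by minimality it is either empty or all of $T_0$. In the latter case $z$ lies in a finite edge stabilizer. In the former, $z$ is hyperbolic and its axis is $\Norm_G H$-invariant, forcing $T_0$ to coincide with the axis and $\Norm_G H$ to be virtually cyclic. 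Either way $\Centr(\Norm_G H)$ is finitely generated, being either finite or a subgroup of a virtually cyclic group.
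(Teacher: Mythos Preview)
Your proof is correct and follows essentially the same approach as the paper. Both arguments analyze the action of $\Norm_G H$ on the fixed-point subtree $T_A^H$, establish finiteness of the quotient by counting $\Norm_G H$-orbits of edges via conjugacy classes of $G$-conjugates of $H$ inside the finite edge groups, and then treat the center by the trichotomy (global fixed point / minimal subtree a line / center contained in an edge stabilizer). Your phrasing of part~(2) via the minimal invariant subtree and the dichotomy on central elements being elliptic or hyperbolic is slightly cleaner than the paper's, which reduces the quotient graph of groups and separates the mapping-torus case, but the content is the same.
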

			\begin{proof}
				Let $T_H$ be the maximal subtree of $T_A$ fixed by $H$. Since $H$ is elliptic, $T_H$ is nonempty. The normalizer $\Norm_G H$ acts on $T_H$, and let $\Bb$ be the corresponding graph of groups. If $v$ is a vertex of $T_H$, then the stabilizer of $v$ in $\Norm_G H$ is $\Norm_G H \cap G_v = \Norm_{G_v} H$. Let us show that $\Bb$ has finitely many edges. The inclusions $T_H \hookrightarrow T_A$ and $\Norm_G H \hookrightarrow G$ induce a graph map: $B \to A$. We claim that that for each edge $e \in EA$, there are only finitely many preimages in $EB$. Let $f_1,f_2$ be two edges of $T_H$ in the same $G$-orbit, and let $X$ be the set of elements of $G$ sending $f_2$ to $f_1$. Then $f_1$ and $f_2$ are in the same $\Norm_G H$ orbit if and only if $H^X = H^{G_{f_1}}$. Therefore there cannot be more preimages of $e$ in $EB$ than there are conjugacy classes of groups isomorphic to $H$ in the finite group $G_e$. Hence $\Bb$ is a finite graph of groups such that each vertex group is isomorphic to some $\Norm_{G_v}H$ and each edge group is finite. Thus $\pi_1(\Bb) = \Norm_G H$ is finitely generated (resp. presented) provided that each $\Norm_{G_v}H$ is.
				
				We now consider the center of $\Norm_G H=\pi_1(\Bb)$. Note that if $\Bb'$ is obtained from $\Bb$ by collapsing some non-loop edges with at least one surjective boundary monomorphism, then the set of vertex stabilizer for $\Bb'$ is a subset of the set of vertex stabilizer for $\Bb$. Thus without loss of generality, we can suppose that $\Bb$ is reduced. If $\Bb$ consists of a single vertex, the center of $\Norm_G H$ is the same as the center of $\Norm_{G_v} H$ for some vertex $v \in VT_A$. If $\Bb$ is a mapping torus, then the center of $\Norm_G H$ is virtually infinite cyclic. In any other case, the center of $\Norm_G H$ is contained in all edge stabilizers of $T_B$, and so must be finite.
			\end{proof}
			
		\subsection{Relative automorphism groups}
			
			The following lemma is well-known, but we include a proof for the sake of completeness.
			\begin{lemma} \label{lemma:edge_stab_canonical}
				Let $\Ab$ and $\Ab'$ be finite reduced graphs of groups with finite edge groups and finitely generated vertex groups with at most one end. Let $\Hc$ and $\Hc'$ be the families of edge stabilizers of $G=\pi_1(\Ab)$ and $G'=\pi_1(\Ab')$ respectively. If $\varphi : G \to G'$ is an isomorphism, then $\varphi$ maps $\Hc$ to $\Hc'$. In particular $\Out_\Hc(G)$ has finite index in $\Out(G)$. 
			\end{lemma}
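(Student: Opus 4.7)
The plan is to show that the family $\Hc$ is an invariant of the abstract group $G$, so that any isomorphism $\varphi \colon G \to G'$ automatically carries $\Hc$ to $\Hc'$. After transporting $\Ab'$ backwards along $\varphi\inv$, I may equivalently prove the following: if $\Ab$ and $\Ab''$ are two reduced decompositions of the same group $G$, both with finite edge groups and finitely generated vertex groups with at most one end, then their edge stabilizer families $\Hc$ and $\Hc''$ coincide.

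The key input is Stallings' theorem on ends: a finitely generated infinite one-ended subgroup $H$ of $G$ does not split nontrivially over a finite subgroup, and therefore must fix a vertex of any simplicial tree on which $G$ acts with finite edge stabilizers, since otherwise a minimal $H$-invariant subtree would furnish a forbidden splitting. Moreover, since edge stabilizers are finite and no infinite subgroup can be contained in one, each infinite vertex stabilizer of $\Ab$ fixes a unique vertex of $T_A$ (were it to fix an incident edge, it would sit inside a finite edge stabilizer). Applying these observations to each infinite vertex stabilizer of $\Ab$ viewed in $T_{A''}$, and symmetrically, I would obtain a bijection between the conjugacy classes of infinite vertex stabilizers of $\Ab$ and of $\Ab''$. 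Finite subgroups are automatically elliptic in any action on a simplicial tree without inversions, so the elliptic subgroups of $T_A$ and of $T_{A''}$ coincide, placing the two trees in the same deformation space.

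To upgrade agreement of vertex stabilizers to agreement of edge stabilizers I would invoke the rigidity of reduced splittings within a common deformation space (Forester's theorem: any two reduced trees in the same deformation space are related by a finite sequence of edge slides, and slides preserve the conjugacy classes of edge stabilizers). This gives $\Hc = \Hc''$ as desired. The main obstacle I anticipate is precisely this last step: the preceding Stallings-based argument only controls vertex stabilizers, so the passage to edge stabilizers genuinely requires the deformation-space machinery and the reducedness hypothesis.

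For the ``in particular'' claim, the family $\Hc$ decomposes into only finitely many $G$-conjugacy classes, one per edge orbit of $T_A$, of which there are finitely many. The natural action of $\Aut(G)$ on this finite set has kernel $\Aut_\Hc(G)$, which is therefore of finite index in $\Aut(G)$. Since $\Inn(G) \subseteq \Aut_\Hc(G)$ (because $\Hc$ is closed under conjugation), passing to the quotient by inner automorphisms yields that $\Out_\Hc(G)$ has finite index in $\Out(G)$.
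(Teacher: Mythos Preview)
Your approach is correct but takes a genuinely different route from the paper's. You place both Bass--Serre trees in the same deformation space (via Stallings applied to the one-ended vertex groups, together with ellipticity of finite subgroups) and then invoke rigidity of reduced trees within a deformation space to conclude that the edge-stabilizer families agree; this works here because finite edge groups rule out strictly ascending configurations, so reduced trees in the space are indeed related by slides (strictly speaking this refinement is due to Guirardel--Levitt rather than Forester), and slides visibly preserve the set of edge-stabilizer conjugacy classes. The paper instead argues by induction on $|EA|$: it singles out the family $\Fc_{\min}$ of \emph{minimal} finite subgroups over which $G$ splits --- an obvious isomorphism invariant --- collapses all edges with non-minimal stabilizer to obtain a coarser graph $\Bb$, characterises the vertex stabilizers of $\Bb$ intrinsically as the maximal subgroups of $G$ not splitting over a subgroup of an element of $\Fc_{\min}$, and then applies the inductive hypothesis to the collapsed subgraphs $\Ab(v)$, each of which has strictly fewer edges. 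Your argument is conceptually cleaner but imports deformation-space machinery as a black box; the paper's is elementary and self-contained, and its collapse-along-$\Fc_{\min}$ construction is exactly what is reused in Section~\ref{sec:structure_out} to build the auxiliary graph of groups central to the main results. Your treatment of the ``in particular'' clause agrees with the paper's.
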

			\begin{proof} We prove the first assertion by induction on the number of edges of $\Ab$. If $\Ab$ has no edge, then $\Hc$ is empty and $G$ has at most one end. Hence $G'$ also has at most one end, so $\Ab'$ has no edge and $\Hc'$ is empty.
			
				Suppose now that $|EA|>0$. Let $\Fc$ be the family of finite subgroups of $G$ over which $G$ splits, and let $\Fc_{\min}$ be the minimal elements of this family with respect to inclusion. Defining $\Fc'$ and $\Fc'_{\min}$ similarly, it is clear that $\varphi$ maps $\Fc$ to $\Fc'$ and $\Fc_{\min}$ to $\Fc'_{\min}$.
			
				Let $\Bb$ be the graph of groups obtained by collapsing all edges of $\Ab$ whose edge stabilizer are not in $\Fc_{\min}$. As edge stabilizers of $T_A$ are finite, any finite group over which $G$ splits contains a minimal subgroup over which $G$ splits, so that $\Bb$ is not a single vertex and $\Fc_{\min}$ is nonempty. Let $\Sc$ be the family of vertex stabilizers of $T_B$. 
				
				Note that if a subgroup of $G$ does not split over a subgroup of an element of $\Fc_{\min}$ then it acts elliptically on $T_B$. Therefore, $\Sc$ is characterized as the family of maximal subgroups of $G$ which do not split over a subgroup of an element of $\Fc_{\min}$. Again, defining $\Bb'$ and and $\Sc'$ similarly, it is clear that $\varphi$ maps $\Sc$ to $\Sc'$. Therefore, $\varphi$ maps vertex stabilizers of $T_B$ to vertex stabilizers of $T_{B'}$. Since $\Ab$ and $\Ab'$ are reduced, so are $\Bb$ and $\Bb'$. Hence vertex groups of $\Bb$ (resp. $\Bb'$) correspond bijectively to conjugacy classes of elements of $\Sc$ (resp. $\Sc'$). Therefore $\varphi$ induces bijection $\varphi:VT_B \to VT_{B'}$ which projects to a map on the orbits $\varphi:VB \to VB'$.
				
				For each $v\in VB$, let $\Ab(v)$ be the subgraph of groups of $\Ab$ collapsed to $v$ in $\Bb$. Note that $G_v:= \pi_1(\Ab(v)) \cong \Bb_v$ and that $\Ab(v)$ has strictly fewer edges than $\Ab$. Let $\Hc_v$ be the set of edge stabilizers of $T_{A(v)}$. It is exactly the set of elements of $\Hc \backslash \Fc_{\min}$ contained in $G_v$. Since $\Ab$ is reduced, so is $\Ab(v)$. We apply induction to $(\Ab(v),G_v,\Hc_v)$ and conclude that $\varphi|_{G_v}$ maps $G_v$ to $G'_{\varphi(v)}$ and $\Hc_v$ to $\Hc'_{\varphi(v)}$.
				
				Finally we observe that $\Hc$ is the union of $\Fc_{\min}$ with the set of conjugates in $G$ of elements of $\cup_{v\in VB}\Hc_v$. The same statement holds for $\Hc'$. Since $\varphi$ maps $\Fc_{\min}$ to $\Fc'_{\min}$ and $\cup_{v\in VB}\Hc_v$ to $\cup_{v'\in VB'}\Hc_{v'}$, we conclude that $\varphi$ maps $\Hc$ to $\Hc'$.
			
				Since $\Ab$ is finite, there are finitely many conjugacy classes of edge stabilizers in $G$. The last assertion follows since $\Out_\Hc(G)$ is the subgroup of $\Out(G)$ which induces a trivial permutation of the conjugacy classes of elements of $\Hc$.
			\end{proof}
		
	\section{The structure of $\Out_\Hc(G)$}
	\label{sec:structure_out}
		
		Throughout this section, we fix a triple $(\Ab, G, \Hc)$ as follows:
		$\Ab$ is a finite reduced graph of groups with finite edge groups and finitely generated vertex groups with at most one end. $\Hc$ is a family of elliptic subgroups of $G=\pi_1(\Ab)$ containing each edge stabilizer. Suppose moreover that $\Ab$ does not consist of a single vertex (i.e.\ $G$ is not one-ended). The following two conditions will be used in the sequel:
		\renewcommand{\theenumi}{(C\arabic{enumi})}
		\renewcommand{\labelenumi}{\theenumi}
		\begin{enumerate}
			\item For any $v \in VT_A$ the set of $G_v$-conjugacy classes of elements in $\Hc_v$ is finite.\label{C1}
			\item The normalizer of any edge group in any vertex group $G_v$ containing it is finitely generated. \label{C2}
		\end{enumerate}
		\renewcommand{\theenumi}{\arabic{enumi}}
		\renewcommand{\labelenumi}{(\theenumi)}
		
		\subsection{The graph of groups $\Bb$}	
		\label{subsec:def_Bb}
		
		%Our goal is to define a new graph of group $\Bb$ whose vertex groups will be preserved under the automorphism group.
		The set of edge stabilizers of $T_A$ is nonempty as $G$ is assumed to have more than one end. As edge stabilizers are finite, an edge stabilizer can never be conjugate to a proper subgroup of itself, so the set of edge stabilizers partially ordered by inclusion admits a minimal element. Let $G_0$ be such a minimal edge stabilizer, and let $\Ec$ be the set of edges of $\Ab$ whose edge stabilizer is conjugate to $G_0$. We set $\bar\Bb$ to be the graph of groups obtained by collapsing all edges of $\Ab$ not in $\Ec$.
		
		Let $V=V\bar B$. For each $v \in V$, let $\Ab(v)$ be the connected subgraph of groups of $\Ab$ collapsed to $v$ in $\Bb$. As not all edges of $\Ab$ are collapsed in $\bar \Bb$, each $\Ab(v)$ has fewer edges than $\Ab$. These graphs of groups will not be referred to in this section, but will be used in Sections \ref{sec:finite_pres} and \ref{sec:finite_out} to prove Theorems~\ref{theorem:out_fp} and \ref{theorem:finite_out} respectively.
		
		Observe that $\bar\Bb$ is a finite reduced graph of groups with fundamental group $G$ and Bass-Serre tree $T_{\bar B}$ such that:
		\begin{itemize}
			\item All edge stabilizers of $G$ are conjugate;
			\item No edge stabilizer is conjugate to a proper subgroup of itself.
		\end{itemize}
		
		\begin{remark}
			The remainder of Subsection \ref{subsec:def_Bb} can be applied to any graph of groups $\bar \Bb$ satisfying the two conditions just stated. In fact, all the statements of Section \ref{sec:structure_out} except Corollary~\ref{corollary:ker_tau_G0_finite} remain true if the edge groups of $\bar \Bb$ are assumed to satisfy the two above conditions and to have finite outer automorphism group.
		\end{remark}
		
		In order to fix notation we modify $\bar \Bb$ to a more symmetric graph of groups $\Bb$ without changing the set of elliptic subgroups nor the set of edge groups. Subdivide an edge of $\bar \Bb$ and call $b_0$ the new vertex. Now slide the beginning of each edge to $b_0$ whenever possible. This makes a graph of groups $\Bb$ having vertex set $\{b_0\} \cup V$ with the following properties: $b_0$ has a vertex group isomorphic to some edge group in $\bar\Bb$; for each vertex $v$ of $V$ the vertex group $\Bb_v$ properly contains each incoming edge group; all loop edges begin at $b_0$; all non-loop edges that begin at $b_0$ end in $V$ and vice versa; for any two edges $e \neq f$ having a common terminal vertex $v \in V$, the subgroups $\omega_e(\Bb_e)$ and $\omega_f(\Bb_f)$ are not conjugate in $\Bb_v$.
		
		Write the set of loop edges as $\{e_s \mid s\in S\epm\}$ in such a way that $e_s\inv = e_{s\inv}$. Write also the set of non-loop edges as $\{e_k \mid k \in K\epm\}$ in such a way that $e_k\inv = e_{k\inv}$ and that for each $k\in K$ the edge $e_k$ originates at $b_0$ and ends in $V$. We identify the vertex group of $b_0$ with $G_0$. We introduce the following notation until the end of the section: $\omega(e_k)$ is denoted by $v_k$ for each $k \in K$. For each $k \in K$, we write $G_k$ for the subgroup $[1,e_k,\Bb_{v_k},e_k\inv,1] \subset \pi_1(\Bb,b_0)$. Choosing a maximal tree in $B$, we identify $\Bb_v$ with the group $G_k$ for some $k$ such that $v_k=v$. The normalizer of $G_0$ in $G_k$ is written as $N_k$ and the normalizer of $G_0$ in $G$ is denoted by $N$. We further let $F = N/G_0$ and for $k \in K$ we let $F_k = N_k / G_0$. We do not distinguish between $s \in S$ and the corresponding element $[1,e_s,1] \in \pi_1(\Bb,b_0)$.
		
		Observe that the normalizer of $G_0$ in $G$ has a very simple decomposition $\bar \Nb$ as an amalgam of the groups $N_k$ for $k\in K$ and some mapping tori along $G_0$. Similarly, the group $F=N/G_0$ inherits a decomposition $\bar \Fb$ obtained from $\bar \Nb$ by taking the quotient of every edge and vertex group by $G_0$. Thus edge groups of $\bar \Fb$ are trivial and $\bar \Fb$ is decomposition of $F$ as a free product $F = (\ast_{k \in K} (F_k)) \ast F(S)$ (where $F(S)$ denotes the free group on the set $S$). The decompositions $\Bb$ and $\bar \Nb$ are described in Figure \ref{fig:rose}.
		
		It could happen that $\bar \Nb$ is not minimal, as there can be some $k$ such that $N_k = G_0$. This is represented by a dashed edge in Figure \ref{fig:rose}. We define the graph of groups $\Nb$ and $\Fb$ by removing these vertices and edges from the decomposition $\bar \Nb$ as well as the corresponding ones from $\bar \Fb$. Let $I$ be the set of $i\in K$ such that $N_i = G_0$, and let $J = K \backslash I$ be the other indices. 
	%\begin{remark}
	%	Even if the decomposition $\Ab$ is algebraically rigid, the decompositions $\Nb$ and $\Fb$ need not be. In particular, $\Fb$ is not necessarily a Grushko decomposition of $N/G_0$.
	%\end{remark}
		
		\begin{figure}[htb]
			\begin{center}
				\input{Rose.pstex_t}
				\hspace{20mm}
				\input{Normalizer.pstex_t}
			\end{center}
			\caption{The decomposition $\Bb$ of $G$ and $\bar \Nb$ of $N = \Norm_G(G_0)$}
			\label{fig:rose}
		\end{figure}
		
		\begin{lemma} \label{lemma:vertex_stable}
			The family of vertex stabilizers of $T_B$ is preserved by $\Out_\Hc(G)$.
		\end{lemma}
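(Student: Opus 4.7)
The plan is to treat the two kinds of vertex stabilizers of $T_B$ separately. The stabilizer of the central vertex $b_0$ is $G_0$, and since $G_0$ is a (minimal) edge stabilizer of the original decomposition $\Ab$, we have $G_0 \in \Hc$. By the very definition of $\Out_\Hc(G)$, the $G$-conjugacy class of $G_0$ is preserved, and this handles one family of vertex stabilizers at once.

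For the stabilizers $\Bb_v$ with $v \in V$, fix a representative $\tilde\varphi \in \Aut(G)$ of an element $\varphi \in \Out_\Hc(G)$. The plan is to argue that $\tilde\varphi$ induces a $G$-equivariant equivalence between $T_A$ and its $\tilde\varphi$-twist, compatibly with the collapse $T_A \to T_{\bar B}$. Specializing Lemma \ref{lemma:edge_stab_canonical} to $G'=G$ and $\Ab'=\Ab$ yields $\tilde\varphi(\Hc)=\Hc$, and the inductive argument in that proof in fact shows that the family of vertex stabilizers of $T_A$ is also preserved up to $G$-conjugacy; equivalently, $T_A$ and $\tilde\varphi\cdot T_A$ are equivalent $G$-trees. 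Because $\varphi$ preserves the individual conjugacy class $[G_0]_G$, the subset $\Ec$ of edges of $T_A$ whose stabilizer is conjugate to $G_0$ is $\tilde\varphi$-invariant. Hence the collapse of $T_A$ defining $T_{\bar B}$ intertwines with this equivalence, producing an equivalence between $T_{\bar B}$ and $\tilde\varphi\cdot T_{\bar B}$. The modification $\bar\Bb\leadsto\Bb$ (subdivision at $b_0$ plus the slidings described earlier) does not alter the set of vertex stabilizers beyond adding the already-preserved class $[G_0]_G$, so the family of vertex stabilizers of $T_B$ is preserved under $\varphi$.

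The main obstacle in this plan is the assertion that Lemma \ref{lemma:edge_stab_canonical} yields preservation of the vertex stabilizers of $T_A$ and not only preservation of the family $\Hc$ of edge stabilizers. This is implicit in the inductive structure of that proof, where at each level vertex stabilizers of the partially collapsed tree get matched up; extracting it cleanly would require either re-running the induction in the present setting or giving an intrinsic characterization of the subgroups $\Bb_v$ for $v\in V$, for instance as the maximal subgroups of $G$ that properly contain a conjugate of $G_0$ and act elliptically on every splitting of $G$ whose edge stabilizers are conjugate to $G_0$. The latter characterization appeals to deformation-space uniqueness, in the spirit of Guirardel--Levitt, whereas the former keeps the argument self-contained within the induction already carried out in Lemma \ref{lemma:edge_stab_canonical}.
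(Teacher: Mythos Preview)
Your outline is reasonable but, as you yourself concede, the crucial step is left open: you need the non-central vertex stabilizers of $T_B$ to form an $\Out_\Hc(G)$-invariant family, and neither of your two suggested fixes is actually carried out. The detour through $T_A$ is also unnecessary; the paper works directly with $T_B$.

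The paper's proof takes precisely your second route---an intrinsic characterization---but with a formulation that is both simpler and self-contained. Rather than characterizing the $\Bb_v$ as subgroups acting elliptically on \emph{every} splitting of $G$ with edge groups conjugate to $G_0$ (which, as you note, would call for deformation-space uniqueness), the paper uses only the minimality of $G_0$ among edge stabilizers of $T_A$ together with the one-endedness of the vertex groups of $\Ab$: any subgroup of $G$ that does \emph{not split} over a subgroup of a conjugate of $G_0$ is then forced to be elliptic in $T_B$. Consequently the vertex stabilizers of $T_B$ are exactly the conjugates of $G_0$ together with the maximal subgroups of $G$ that do not split over a subgroup of a conjugate of $G_0$. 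Since $G_0\in\Hc$, the group $\Out_\Hc(G)$ preserves the conjugacy class of $G_0$, hence preserves this description, and the lemma follows. No induction from Lemma~\ref{lemma:edge_stab_canonical} and no deformation-space argument is needed.
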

		\begin{proof}
			By construction of $\Bb$, all edge stabilizers of $G$ acting on $T_B$ are conjugate to $G_0$. Let $\hat \varphi \in \Out_\Hc(G)$ be a relative automorphism and $\varphi$ be a representative of $\hat \varphi$. Since $G_0$ lies in $\Hc$ the automorphism $\varphi$ preserves the conjugacy class of $G_0$, and also preserves the family of subgroups of conjugates of $G_0$.
			
			Recall that $G_0$ is a minimal element among the family of edge stabilizers of $G$ acting on $T_A$ and that vertex groups of $\Ab$ have at most one end. Therefore if a subgroup of $G$ does not split over a subgroup of a conjugate of $G_0$ then it is contained in a vertex stabilizer of $T_B$. So vertex stabilizers of $T_B$ are either conjugates of $G_0$ or maximal subgroups of $G$ which do not split over a subgroup of a conjugate of $G_0$. Since $\varphi$ preserves the conjugacy class of $G_0$ it must preserve the family of vertex stabilizers.
		\end{proof}
		
		\subsection{The maps $\rho$} \label{subsec:rho}
		For every subgroup $H$ of a group $\Gamma$, let $\Out_H(\Gamma)$ be the subgroup of the outer automorphism group preserving the conjugacy class of $H$. Then there is a map $\rho_H : \Out_H(\Gamma) \to \Out(\Norm_\Gamma(H))$ defined as follows. Fix an element $\hat\alpha$ in  $\Out_H(\Gamma)$. Choose a representative $\alpha$ of $\hat\alpha$ in $\Aut(\Gamma)$ such that $\alpha(H)=H$. This representative is unique up to right multiplication by an inner automorphism normalizing $H$. Moreover, $\alpha(\Norm_\Gamma H) = \Norm_\Gamma H$. Define $\rho_H(\hat\alpha)$ to be the element of $\Out(\Norm_\Gamma(H))$ represented by the restriction of $\alpha$ to $\Norm_\Gamma H$. By the discussion above, this map is well defined. $\rho_H$ is easily checked to be a homomorphism.
		
		Having made this observation, we can now define $\Out^0(G)$. By Lemma~\ref{lemma:vertex_stable} the group $\Out_\Hc(G)$ permutes the (finite) set of conjugacy classes of vertex stabilizers of $\Bb$. Let $\Out'_\Hc(G)$ be the subgroup of $\Out_\Hc(G)$ which induce the trivial permutation of this set. Moreover Lemma~\ref{lemma:normvertexgp} implies that the vertex stabilizer $G_v$ is self-normalized in $G$ for each $v \in V$, so there are maps $\rho_v : \Out'_\Hc(G) \to \Out(G_v)$. The image of $\rho_v$ preserves the family $\Hc_v$, but does not necessarily preserve the $G_v$-conjugacy class of each element. Let $\Out^0(G)$ be the largest subgroup of $\Out_\Hc(G)$ which maps to $\Out_{\Hc_v}(G_v)$ for each $v\in V$, i.e.\ the subgroup of $\Out_\Hc(G)$ which preserves the $G_v$-conjugacy class of each element of $\Hc_v$. If condition \ref{C1} is fulfilled, i.e.\ if there are finitely many such conjugacy classes for each $v \in VA$, $\Out^0(G)$ has finite index in $\Out_\Hc(G)$. %Roughly speaking $\Out^0(G)$ is the subgroup of $\Out_\Hc(G)$ which induces a trivial permutation of the edges in $\{e_k\mid k \in K\}$.
		
		Putting all the maps $\rho_v$ together, we define the map 
		\[\rho_\Bb : \Out^0(G) \stackrel{\prod \rho_v}{\longrightarrow} \prod_{v \in V} \Out_{\Hc_v}(G_v)\]
		
		Let $\Nc$ be the set of vertex stabilizers of $N$ acting on $T_N$, i.e.\ the family of subgroups conjugate to either $G_0$ or to one of the $N_j$. Let $\Fc$ be set of vertex stabilizers of $F$ acting on $T_F$. Remark that $\Fc$ is precisely the image of $\Nc$ in $F$ under the projection $\pi:N \to F$. Similarly to above, we have maps $\rho'_j:\Out_\Nc(N) \to \Out_{G_0}(N_j)$ and $\rho''_j: \Out_\Fc(F) \to \Out(F_j)$ and we again combine these maps to form
		\[ \rho_\Nb : \Out_\Nc(N) \stackrel{\prod \rho'_j}{\longrightarrow} \prod_{j \in J} \Out_{G_0}(N_j) \quad \quad ; \quad \quad \rho_\Fb : \Out_\Fc(F) \stackrel{\prod \rho''_j}{\longrightarrow} \prod_{j \in J} \Out(F_j)\]
		Since $\Out^0(G)$ preserves in the conjugacy class of $G_0$ and the conjugacy class of each $G_j$, we define the map $\sigma : \Out^0(G) \to \Out_\Nc(N)$ as above. For each $k \in I \cup J$ there is a map $\sigma_k : \Out_{\Hc_{v_k}}(G_{v_k}) \to \Out_{G_0}(N_k)$. Define $\sigma_I = \prod_{i \in I} \sigma_i$ and $\sigma_J = \prod_{j \in J} \sigma_j$.
		
		Since any automorphism representing an element of $\Out_\Nc(N)$ must fix $G_0$, the projection $\pi:N \to N/G_0$ induces a map $\tau : \Out_\Nc(N) \to \Out_\Fc(F)$, and maps $\tau_j : \Out_{G_0}(N_j) \to \Out(F_j)$. Define $\tau_J = \prod_{j \in J} \tau_j$.
		
		Let $\Wc_\Bb$, $\Wc_\Nb$ and $\Wc_\Fb$ denote the kernels of the maps $\rho_\Bb$, $\rho_\Nb$ and $\rho_\Fb$ respectively. These groups are exactly those outer automorphisms $\hat \varphi$ such that any representative $\varphi$ of $\hat \varphi$ acts on each vertex stabilizer as a conjugation by some element in the corresponding group. Since the squares at the right of the diagram in Figure \ref{fig:CVS_groups} are commutative, the maps $\sigma$ and $\tau$ induce maps $\sigma^* : \Wc_\Bb \to \Wc_\Nb$ and $\tau^* : \Wc_\Nb \to \Wc_\Fb$ on the kernels.
		
		\begin{figure}[htb]
			\[ \begin{xy}
		  \xymatrix{
		     \Wc_\Bb \ar@{^{(}->}[r] \ar[d]_{\sigma^*} & \Out^0(G) \ar[r]^{\rho_\Bb} \ar[d]_{\sigma}    &  \prod_{v \in V} \Out_{\Hc_v}(G_v)  \ar[d]^{\sigma_J} \ar[rd]^{\sigma_I}  \\
		     \Wc_\Nb \ar@{^{(}->}[r] \ar[d]_{\tau^*} & \Out_\Nc(N) \ar[r]^{\rho_\Nb} \ar[d]_{\tau} &   \prod_{j \in J} \Out_{G_0}(N_j) \ar[d]^{\tau_J}& \prod_{i \in I} \Out(G_0)\\
		     \Wc_\Fb \ar@{^{(}->}[r] & \Out_\Fc(F)  \ar[r]^{\rho_\Fb} &   \prod_{j \in J} \Out(F_j)
		  	}
			\end{xy}\]
			\caption{The diagram defining the maps $\sigma^*$ and $\tau^*$.}
			\label{fig:CVS_groups}
		\end{figure}
		
		The following lemma extends, under certain conditions, an automorphism of a vertex stabilizer to a group acting on a tree, and will be our main tool for constructing automorphisms.
		\begin{lemma}[{\cite[Proposition 2.1]{Levitt}}] \label{lemma:extension}
			Let $\Gamma$ be a group acting on a tree $T$ and let $v$ be a vertex of $T$. Let $\varphi$ be an automorphism of $\Gamma_v$ that acts by conjugation (in $\Gamma_v$) on each edge stabilizer contained in $\Gamma_v$. Then $\varphi$ extends to an automorphism $\widetilde \varphi$ of $\Gamma$. Moreover, one can choose $\widetilde \varphi$ so that it acts by conjugation on each vertex stabilizer $\Gamma_w$ for any vertex $w$ not in the $\Gamma$-orbit of $v$.
		\end{lemma}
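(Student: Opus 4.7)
The plan is to realize $\Gamma$ as the fundamental group of the graph of groups $\Gamma\backslash T$ and to build $\widetilde\varphi$ by specifying it on generators. I pick a maximal tree of $\Gamma\backslash T$, lift it to a subtree $\mathcal T$ of $T$ containing $v$, and let $\{t_e\}$ denote the stable letters attached to edges of $\Gamma\backslash T$ outside the maximal tree. Then $\Gamma$ is generated by $\bigcup_{\widetilde w\in V\mathcal T}\Gamma_{\widetilde w}\cup\{t_e\}$, subject to relations that identify the two inclusions of each edge group along tree edges and that twist these inclusions by $t_e$ along non-tree edges.

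To specify $\widetilde\varphi$, I assign an element $g_{\widetilde w}\in\Gamma$ to every vertex $\widetilde w\in V\mathcal T$, with $g_v:=1$, and declare $\widetilde\varphi|_{\Gamma_v}:=\varphi$ and $\widetilde\varphi|_{\Gamma_{\widetilde w}}:=i_{g_{\widetilde w}}$ for $\widetilde w\neq v$. The elements $g_{\widetilde w}$ are chosen by induction along $\mathcal T$ starting from $v$: for a neighbour $\widetilde w$ of $v$ via an edge $\widetilde e$, the hypothesis supplies some $g\in\Gamma_v$ with $\varphi|_{\Gamma_{\widetilde e}}=i_g|_{\Gamma_{\widetilde e}}$, and I set $g_{\widetilde w}:=g$; for a vertex $\widetilde w'$ further from $v$ in $\mathcal T$, reached through a predecessor $\widetilde w$, I simply set $g_{\widetilde w'}:=g_{\widetilde w}$. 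This choice guarantees that the two maps induced by $\widetilde\varphi$ on each tree edge stabilizer agree, so all tree-edge relations are respected. For each stable letter $t_e$, I define $\widetilde\varphi(t_e)$ as a product of the form $g_{\widetilde u}\,t_e\,g_{\widetilde w}\inv$ (where $\widetilde u,\widetilde w\in V\mathcal T$ are the orbit representatives of the two endpoints of a chosen lift of $e$), corrected by a further element of $\Gamma_e$ supplied by the hypothesis, so that the HNN-type twisting relation survives on the nose.

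A direct check on the presentation then shows that $\widetilde\varphi$ extends to a homomorphism $\Gamma\to\Gamma$. Running the construction in parallel with $\varphi\inv$ in place of $\varphi$ yields a two-sided inverse, so $\widetilde\varphi$ is an automorphism. The moreover clause is immediate: any vertex $w$ outside the $\Gamma$-orbit of $v$ is $\Gamma$-conjugate to some $\widetilde w\in V\mathcal T\setminus\{v\}$, where $\widetilde\varphi|_{\Gamma_{\widetilde w}}=i_{g_{\widetilde w}}$ by construction, and conjugating the conjugator transfers this to $\widetilde\varphi|_{\Gamma_w}$. The main obstacle is the bookkeeping for stable letters: one must verify that the prescription for $\widetilde\varphi(t_e)$ is consistent with the edge-group relations appearing at both endpoints of the lift of $e$, and the freedom to pre-multiply by an element of $\Gamma_e$ is precisely what the hypothesis leaves us to absorb any discrepancy.
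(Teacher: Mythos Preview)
Your proposal is correct and follows essentially the same strategy as the paper: realize $\Gamma$ as the fundamental group of the quotient graph of groups, extend $\varphi$ by declaring it to be (a conjugation) on the other vertex groups, and use the elements supplied by the hypothesis to absorb the discrepancies at edges. The only cosmetic difference is that the paper works with the $\Cb$-path description of $\pi_1(\Cb,u)$ and attaches the correcting elements $\gamma_e$ directly to the oriented edges at $u$ (setting $\gamma_f=1$ elsewhere), which avoids your propagation of $g_{\widetilde w}$ along a maximal tree and the separate treatment of stable letters; your ``correction by an element of $\Gamma_e$'' should really read ``by an element of $\Gamma_v$'', but the argument goes through unchanged.
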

		\begin{proof}
			Let $\Cb$ be the quotient graph of groups and $u$ be the projection of $v$ in $\Cb$. For each edge $e \in EC$ such that $\alpha(e)=u$, choose $\gamma_e \in \Cb_u$ such that $\varphi|_{\Cb_e} = i_{\gamma_e}|_{\Cb_e}$. For all other edges $f\in EC$, let $\gamma_f=1$. Letting $X$ be the set of $\Cb$-paths, we define
			\[ \widetilde \varphi: X \to X : a_0,e_1,a_1,\cdots,e_n,a_n \mapsto \bar a_0,e_1,\bar a_1,\cdots,e_n,\bar a_n \]
			where
			\[ \bar a_i=\begin{cases} a_i & \text{if } a_i \notin \Cb_{u} \\ \gamma_{e_i}\inv \varphi(a_i) \gamma_{e_{i+1}} & \text{if } a_i \in \Cb_{u} \end{cases} \ \ \ \ (\gamma_{e_0} = \gamma_{e_{n+1}} := 1)\]
			Routine computations show that $\widetilde \varphi$ induces a well defined endomorphism of $\Gamma=\pi_1(\Cb,u)$. Moreover, if we define $\widetilde \varphi'$ similarly by extending $\varphi'=\varphi\inv$ and $\gamma_e' := \sigma\inv(\gamma_e\inv)$ it can be checked that $\widetilde \varphi \widetilde \varphi' = \widetilde \varphi' \widetilde \varphi = \id_\Gamma$ so that $\widetilde \varphi$ is an automorphism of $\Gamma$. 
			
			If $w$ is a vertex of $T$ with projection $x \neq u$ in $\Cb$, the vertex stabilizer $\Gamma_w$ can be written as $[p\Cb_{x}p\inv]$ for some $\Cb$-path $p$ starting at $u$ and ending at $x$. It is clear that the restriction of $\widetilde \varphi$ to $\Gamma_w$ is conjugation by $[\widetilde \varphi(p) p\inv]$.
		\end{proof}
		\begin{remark}
			If we are given the trivial automorphism of $\Gamma_v$ and choose $\gamma_e=1$ for all edges $e \in EC$ except for one edge $f$ starting at $u$ for which we take $\gamma_f \in \Centr_{\Cb_u}\alpha_f(\Cb_f)$, then the automorphism constructed in the last lemma is a Dehn twist along the edge $f$.
		\end{remark}
		\begin{corollary} \label{corollary:im_sigma_im_rho}
			The map $\sigma^*$ is surjective. If $\Out(G_0)$ is finite, the image of $\rho_\Bb$ has finite index in $\prod_{v \in V} \Out_{\Hc_v}(G_v)$.
		\end{corollary}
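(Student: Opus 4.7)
The plan is to use Lemma~\ref{lemma:extension} as the basic tool: it lets us lift automorphisms from a single vertex group to the whole graph of groups, subject to a conjugation condition on edge stabilizers.

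For the surjectivity of $\sigma^*$, I will start with a class $[\phi]\in\Wc_\Nb$ and pick a representative $\phi\in\Aut(N)$ with $\phi(G_0)=G_0$. Since $[\phi]\in\Wc_\Nb$ forces $\phi|_{N_j}$ to be inner in $N_j$ for every $j\in J$, I can write $\phi|_{N_j}=i_{n_j}|_{N_j}$ with $n_j\in N_j$; composing $\phi$ with $i_{n_{j_0}^{-1}}$ for a fixed $j_0\in J$ gives a representative with $\phi|_{G_0}=\id_{G_0}$. Applied to $\varphi=\id_{G_0}$, Lemma~\ref{lemma:extension} has vacuous hypothesis and produces an extension $\psi\in\Aut(G)$ that acts by conjugation on each vertex stabilizer $G_{v_k}$, so $[\psi]\in\Wc_\Bb$. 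The only freedom in $\psi$ is a choice of $\gamma_e\in Z(G_0)$ for each edge $e\in EB$ starting at $b_0$; I will use that freedom to match $\phi$ on the stable letters $s\in S$ (setting $\gamma_{e_s}$ so that $\psi(s)=\phi(s)$) and on each $N_j$ (setting $\gamma_{e_j}$ so that the induced conjugation on $G_{v_j}$ restricts to $i_{n_j}|_{N_j}$). With these choices, $\psi|_N$ coincides with $\phi$ up to inner automorphism of $N$, so $\sigma^*([\psi])=[\phi]$.

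For the finite-index assertion, assume $\Out(G_0)$ is finite. For each $v\in V$, restricting a representative $\varphi$ of a class in $\Out_{\Hc_v}(G_v)$ with $\varphi(G_0)=G_0$ to $G_0$ defines a map $\theta_v:\Out_{\Hc_v}(G_v)\to\Out(G_0)/\image(N_v\to\Out(G_0))$; its target is a quotient of the finite group $\Out(G_0)$. A tuple $([\varphi_v])_v$ can lie in the image of $\rho_\Bb$ only if the various $\theta_v([\varphi_v])$ agree via the shared edge group $G_0$, a compatibility that cuts out a finite-index subgroup of $\prod_{v\in V}\Out_{\Hc_v}(G_v)$. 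I then show every compatible tuple lifts: adjust each representative by an inner automorphism of $G_v$ so that all restrictions to $G_0$ coincide as elements of $\Aut(G_0)$, apply Lemma~\ref{lemma:extension} at a single vertex to extend such a restriction to an element of $\Out^0(G)$ realising the tuple up to an element of $\Wc_\Bb$, and finally absorb the discrepancy using the surjectivity of $\sigma^*$ established in the first part.

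The main technical obstacle is in the parameter-matching step for surjectivity: after the normalisation $\phi|_{G_0}=\id$, I must verify that the central twists $\gamma_e\in Z(G_0)$ provide enough freedom to realise an arbitrary element of $\Wc_\Nb$ on $N$. This amounts to a structure result identifying the normalised elements of $\Wc_\Nb$ with the group of Dehn twists along the edges of $\Nb$, very much in the spirit of the Remark following Lemma~\ref{lemma:extension}; the HNN/amalgam structure of $\Nb$ along $G_0$ makes this a direct, if slightly tedious, computation.
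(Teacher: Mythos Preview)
Your argument for the surjectivity of $\sigma^*$ has a genuine gap. You apply Lemma~\ref{lemma:extension} at the vertex $b_0$ of $\Bb$ with $\varphi=\id_{G_0}$, and then hope that the freedom in the parameters $\gamma_e\in Z(G_0)$ suffices to realise an arbitrary class in $\Wc_\Nb$. It does not. The automorphisms of $G$ produced this way are exactly Dehn twists of $\Bb$ with twist parameters in the \emph{finite abelian} group $Z(G_0)$; in particular they form an abelian group. But $\Wc_\Nb$ is typically highly non-abelian: it contains, for instance, all partial conjugations of $N$ (conjugate one $N_{j_1}$ by an element of some other $N_{j_0}$, fix everything else). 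Your claimed ``structure result identifying the normalised elements of $\Wc_\Nb$ with the group of Dehn twists along the edges of $\Nb$'' is therefore false. A clean test case: take $G_0=\{1\}$, $S=\emptyset$, $|K|=2$, so $G=N=G_1*G_2$ and $\sigma^*$ is the identity on $\Wc_\Bb=\Wc_\Nb$; your construction then produces only the trivial automorphism, whereas $\Wc_\Nb$ contains all partial conjugations.

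The paper avoids this by a simple but essential trick: it introduces an auxiliary decomposition $\Bb'$ of $G$ in which the vertex group at $b_0$ is the \emph{whole} normalizer $N$ (obtained from $\Bb$ by collapsing the loop edges and sliding each $N_k$ onto the non-loop edge $e_k$, so that the edge groups of $\Bb'$ are precisely the $N_k$). Given $[\phi]\in\Wc_\Nb$, one then applies Lemma~\ref{lemma:extension} to the full automorphism $\phi$ of $N$ in the decomposition $\Bb'$; since $\phi$ acts by conjugation on each edge group $N_k$, the lemma extends $\phi$ to an automorphism of $G$ acting by conjugation on every $G_v$, $v\in V$, and this extension visibly maps to $[\phi]$ under $\sigma^*$. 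The same decomposition $\Bb'$, or equivalently a direct application of Lemma~\ref{lemma:extension} at each $v\in V$ in $\Bb$, also gives the finite-index statement much more directly than your compatibility-condition argument: one simply lifts the finite-index subgroup $\prod_v \Out^*(G_v)$ of tuples whose representatives act by conjugation on every edge stabilizer in $G_v$.
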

		\begin{proof}
			We introduce an auxiliary graph of groups $\Bb'$ obtained from $\Bb$ as follows (see Figure \ref{fig:bprime}): collapse all loop edges of $\Bb$ and slide $N_k$ along each edge $e_k$, thus setting $N$ to be the vertex group of $b_0$. 
			
			Let us show that $\sigma^*$ is surjective. If $\hat \varphi$ is an outer automorphism in $\Kc_\Nb$, any representative $\varphi$ of $\hat \varphi$ acts as a conjugation on each $N_k$ so one can apply Lemma~\ref{lemma:extension} and extend $\varphi$ to an automorphism $\psi$ of $G$ such that $\psi$ acts as a conjugation on each $G_v$, so that $\psi$ is in $\Kc_\Bb$ and $\sigma^*(\hat \psi) = \hat \varphi$.
			
			If $\Out(G_0)$ is finite the subgroup $\Out^*(G_v)$ of $\Out_{\Hc_v}(G_v)$ which acts as a conjugation on each edge stabilizer contained in $G_v$ is of finite index in $\Out_{\Hc_v}(G_v)$. Applying Lemma~\ref{lemma:extension} for each $v\in V$ yields a lift of any element in $\prod_{v\in V} \Out^*(G_v)$, which finishes the proof.
		\end{proof}
		\begin{figure}[hbt]
				\begin{center}
					\input{bprime.pstex_t}
				\end{center}
				\caption{The decomposition $\Bb'$ of $G$}
				\label{fig:bprime}
			\end{figure}

		\subsection{The kernels of $\sigma^*$ and $\tau^*$}
		
		Let $\Bb'$ be the graph of groups defined in the proof of Corollary~\ref{corollary:im_sigma_im_rho}. We describe the kernel of $\sigma^*$ in terms of Dehn twists in that splitting of $G$. For $k\in K$ and $g_k \in \Centr_G(N_k)$, define the automorphism $D^k_{g_k}$ of $G$ as the Dehn twist of $\Bb'$ along the edge $e_k$ and with element $g_k$. Clearly, $\widehat{D^k_{g_k}}$ lies in the kernel of $\sigma^*$, and $D^k_{g_k}$ commutes with $D^{k'}_{g_{k'}}$ if $k$ is different from $k'$. This defines a homomorphism 
		\[ q : \prod_{k\in K} \Centr_G(N_k) {\to} \ker \sigma^* : (g_k)_{k \in K} \mapsto \prod_{k \in K} \widehat{D^k_{g_k}} \]
		\begin{lemma}\label{lemma:ker_sigma}
			The kernel of $\sigma^*$ fits into the exact sequence
			\[1 \to \Centr(G) \to \Centr(N) \times \prod_{v\in V} \Centr(G_v) \stackrel{p}{\to} \prod_{k\in K} \Centr_G(N_k) \stackrel{q}{\to} \ker \sigma^* \to 1\]
			where $p$ embeds $\Centr(N)$ diagonally into $\prod_{k\in K} \Centr_G(N_k)$ and embeds $\Centr(G_v)$ diagonally into $\prod_{v_k=v} \Centr_G(N_k)$.
		\end{lemma}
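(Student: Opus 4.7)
The plan is to verify exactness at each of the four positions, leveraging that $G=\pi_1(\Bb')$ is generated by $N$ together with the $G_v$'s, so an automorphism of $G$ is determined by its restrictions to these vertex groups (up to amalgamation ambiguity captured by Dehn twists). First I check that the maps are well defined. The map $\iota$ is the diagonal embedding $z \mapsto (z,(z)_v)$, and its image lies in the target because any $z \in \Centr(G)$ normalizes $G_0$ (so $z \in \Norm_G(G_0)=N$) and normalizes each $G_v$ (so $z \in G_v$ by Lemma~\ref{lemma:normvertexgp}), being manifestly central in each factor. The map $p$, given by $p(n,(z_v))_k = n \, z_{v_k}\inv$, lands in $\prod_k \Centr_G(N_k)$ since both $\Centr(N)$ and $\Centr(G_{v_k})$ centralize $N_k \subset N \cap G_{v_k}$. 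Finally, $q$ lands in $\ker \sigma^*$ because each $D^k_{g_k}$ restricts to an inner automorphism on every vertex group of $\Bb'$ (identity on $N$, identity on $G_{v'}$ for $v' \neq v_k$, and $i_{g_k}$ on $G_{v_k}$).

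Injectivity of $\iota$ is immediate. For exactness at $\Centr(N) \times \prod_v \Centr(G_v)$, direct substitution shows $p \circ \iota = 1$; conversely, if $p(n,(z_v))=1$ then $n = z_{v_k}$ for every $k$, and since every $v \in V$ equals some $v_k$, all the $z_v$ coincide with $n$. This common element centralizes $N$ and every $G_v$, hence centralizes the generating set of $G$ and so lies in $\Centr(G)$.

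Exactness at $\prod_k \Centr_G(N_k)$ is the crux. For $\image p \subset \ker q$, I exhibit $\prod_k D^k_n = i_n$ in $\Aut(G)$ for $n \in \Centr(N)$ by checking agreement on a generating set: both sides restrict to the identity on $N$ (using centrality of $n$) and to $i_n$ on each $G_v$, so they agree on $G$. The contribution of $z_v \in \Centr(G_v)$ is trivialized since each twist $D^k_{z_v\inv}$ with $v_k=v$ restricts to the identity on every vertex group (trivially on $N$, and on $G_v$ since $z_v$ is central there). For the reverse inclusion, given $(g_k) \in \ker q$ with $\prod_k D^k_{g_k} = i_h$, restrict to $N$ to get $h \in \Centr_G(N)$; since any element centralizing $N$ normalizes $G_0$ and so lies in $N$, one obtains $h \in \Centr(N)$, and we set $n:=h$. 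Restricting to each $G_v$ and comparing conjugations extracts an element $z_v \in \Centr(G_v)$ such that $p(n,(z_v))$ and $(g_k)$ coincide; the diagonal embedding of $\Centr(G_v)$ into positions $k$ with $v_k=v$ is precisely what absorbs the freedom when several edges share the vertex $v$.

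Finally, for surjectivity of $q$ onto $\ker \sigma^*$, take $\hat\varphi \in \ker \sigma^*$ with a representative $\varphi$. Since $\sigma^*(\hat\varphi)$ is trivial, $\varphi|_N$ is inner in $N$, so composing with an inner automorphism of $G$ we may normalize $\varphi|_N = \id$, whence $\varphi|_{N_k}=\id$ for each $k$. As $\hat\varphi \in \Wc_\Bb$, each restriction $\varphi|_{G_v}$ is conjugation by some $z_v \in G_v$, and the constraint $\varphi|_{N_k}=\id$ forces $z_v \in \Centr_{G_v}(N_k) \subset \Centr_G(N_k)$ for each $k$ with $v_k = v$. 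Setting $g_k = z_{v_k}$ for one distinguished $k$ per $v \in V$ (and $g_{k'}=1$ for any redundant $k'$), the automorphism $q((g_k))$ agrees with $\varphi$ on every vertex group of $\Bb'$, hence on all of $G$. The main technical obstacle throughout is the bookkeeping at vertices shared by several edges of $\Bb'$, where one must check that the product formula for Dehn twists on the shared vertex group matches precisely the diagonal embedding into $\prod_{v_k=v} \Centr_G(N_k)$ stipulated in the definition of $p$.
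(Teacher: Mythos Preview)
Your argument has a genuine gap: it repeatedly assumes that $G=\pi_1(\Bb')$ is generated by $N$ together with the vertex groups $G_v$ for $v\in V$, so that two automorphisms agreeing on these subgroups must agree on $G$. This is false whenever two distinct edges $e_k,e_{k'}$ of $\Bb'$ share the same terminal vertex $v_k=v_{k'}=v$. In that case $\Bb'$ is not a tree, and one needs the additional generators $t_{kk'}=[1,e_k,1,e_{k'}^{-1},1]$; the paper explicitly records the generating set $\bigcup_{k\in K} G_k \cup N \cup \{t_{kk'}\mid v_k=v_{k'}\}$ and works with the conjugate copies $G_k$ rather than with a single $G_v$.

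This omission breaks your proof at three places. For surjectivity of $q$, after normalizing $\varphi|_N=\id$ you set $g_k=z_{v_k}$ for one distinguished $k$ per vertex and $g_{k'}=1$ for the others; but agreement of $q((g_k))$ and $\varphi$ on $N$ and on each $G_v$ does not force agreement on the $t_{kk'}$, so you have not shown $\varphi=\prod_k D^k_{g_k}$. The paper instead chooses one $g_k$ for \emph{each} $k\in K$ via $\varphi|_{G_k}=i_{g_k}|_{G_k}$, and uses the compatibility $g_{k'}=\varphi(t_{k'k})g_k$ to see that $\varphi$ is determined on the whole generating set. For $\ker q\subset\image p$, you claim that restricting to $G_v$ ``extracts'' a single $z_v\in\Centr(G_v)$ with $(g_k)=p(n,(z_v))$; but this requires $n^{-1}g_k=n^{-1}g_{k'}$ whenever $v_k=v_{k'}$, which is exactly what the paper proves by computing $\varphi(t_{kk'})=t_{kk'}$. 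For $\image p\subset\ker q$, the same generating-set issue means ``both sides restrict to the identity on $N$ and to $i_n$ on each $G_v$, so they agree on $G$'' is not yet justified. Your final paragraph acknowledges the bookkeeping at shared vertices as the ``main technical obstacle'', but the proof as written does not carry it out.
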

		\begin{proof}
			Let $\hat\varphi$ be an outer automorphism in the kernel of $\sigma^*$. By definition of $\sigma^*$ we can choose a representative $\varphi$ of $\hat \varphi$ that restricts to the identity on $N$. For each $k,k'$ such that $v_k=v_{k'}$ we let $t_{kk'} = [1,e_k,1,e_{k'}\inv,1] \in G= \pi_1(\Bb',b_0)$. Note that if $k, k'\in K$ are such that $v_k=v_{k'}$, we have $G_{k'} = t_{k'k} G_k t_{k'k}\inv$. Therefore, if $g_k\in G$ is such that $\varphi|_{G_k} = i_{g_k}|_{G_k}$ then $\varphi|_{G_{k'}} = i_{\varphi(t_{k'k})g_k}|_{G_{k'}}$. Moreover, since $\varphi$ restricts to the identity on $N$ the element $g_k$ must belong to $\Centr_G(N_k)$. Fix a tuple $(g_k)_{k\in K}$ such that 
			\begin{align}
				\varphi|_{G_k} &= i_{g_k}|_{G_k} \label{varphi1} \\
				g_{k'}&=\varphi(t_{k'k})g_k \text{ for each } k,k' \text{ with } v_k=v_{k'} \label{varphi2}
			\end{align} 
			Notice that the set \[\bigcup_{k \in K} G_k \bigcup N \bigcup \{t_{kk'} | v_k=v_{k'}\}\] generates $G= \pi_1(\Bb',b_0)$. Therefore the tuple $(g_k)_{k\in K}$ defines $\varphi$ from the fact that $\varphi$ restricts to the identity on $N$ and from properties \eqref{varphi1} and \eqref{varphi2}. In conclusion, $\varphi = \prod_{k\in K} D^k_{g_k}$, and the kernel of $\sigma^*$ can be viewed as a quotient of $\prod_{k\in K} \Centr_G(N_k)$.
			
			Let now $(g_k) \in \prod_{k\in K} \Centr_G(N_k)$ be a tuple in the kernel of $q$. In other words $\varphi = \prod_{k\in K} D^k_{g_k}$ is an inner automorphism $i_n \in \Inn(G)$. Since $\varphi$ acts as the identity on $N$, the element $n$ must lie in the centralizer of $N$, which equals the center of $N$ as $G_0 \subset N$. Hence $\tilde\varphi:=\prod_{k \in K} D^k_{n\inv} \varphi$ is the identity on $G$. Therefore $\tilde{g_k}:=n\inv g_k$ lies in the center of $G_k$ for each $k$. It remains to show that if $v_k = v_{k'}$ then $\tilde{g_k} = \tilde{g_{k'}}$. Since $\varphi$ is the identity on $G$, we have
			$\varphi(t_{kk'}) = [1,e_k,\tilde{g_k} \tilde g_{k'}\inv,e_{k'}\inv,1] = [1,e_k,1,e_{k'}\inv,1] = t_{kk'}$
			which implies that $\tilde{g_k}\tilde g_{k'}\inv = 1$.
			
			The kernel of $p$ is exactly those elements of $\Centr(N)$ which commute with all elements of the generating set of $G$ described above. Therefore, the kernel of $p$ is isomorphic to the center of $G$.
		\end{proof}
		\begin{remark}
			Note that for each $k \in K$, either $N_k=G_0$ in which case $\Centr_G(N_k) = \Centr_G(G_0)$, or $N_k \neq G_0$ and $\Centr_G(N_k) = \Centr(N_k)$ by Lemma~\ref{lemma:normvertexgp}. Hence the exact sequence in Lemma~\ref{lemma:ker_sigma} can be rewritten as
			\[1 \to \Centr(G) \to \Centr(N) \times \prod_{v\in V} \Centr(G_v) \stackrel{p}{\to} \prod_{i\in I} \Centr_G(G_0) \times \prod_{j \in J} \Centr(N_j) \stackrel{q}{\to} \ker \sigma^* \to 1\]
		\end{remark}
		
		Let $\Tc$ be the subgroup of the kernel of $\tau^*$ consisting of those outer automorphisms $\hat \psi$ that have a representative $\psi$ that restricts to the identity on $G_0$ and that induce the identity on the quotient $N/G_0$. It is clear that if $\Out(G_0)$ is finite, $\Tc$ has finite index in the kernel of $\tau^*$. Let $K_j$ denote the subgroup of $N_j$ of those elements that act trivially by conjugation on $N_j/G_0$.
		
		If $n_j$ is an element of $\Centr_{K_j}(G_0)$, let $D^j_{n_j}$ be the automorphism acting on $N_j$ as conjugation by $n_j$ and as the identity on $N_j'$ for $j'$ different from $j$ and on $s \in S$. If $n_s$ is an element of the center of $G_0$ and $s \in S$, let $D^s_{n_s}$ be the automorphism of $N$ acting as the identity on $N_j$ for each $j \in J$, fixing $s'$ for each $s'$ different from $s$ and mapping $s$ to $n_ss$. Clearly, $\widehat{D^l_{n_l}}$ lies in $\Tc$ for each $l \in J \cup S$, and $D^l_{n_l}$ commutes with $D^{l'}_{n_{l'}}$ if $l$ is different from $l'$. Therefore there is a map 
		\[g:\prod_{s \in S} \Centr(G_0) \times \prod_{j\in J} \Centr_{K_j}(G_0) \to \Tc : (n_l)_{l \in S \cup J} \mapsto \prod_{l \in S\cup J} \widehat{D^l_{n_l}} \]
		
		\begin{lemma}\label{lemma:ker_tau}
			$g$ is surjective and $(1)_{s \in S} \times \prod_{j \in J} \Centr(N_j)$ lies in the kernel of $g$.
		\end{lemma}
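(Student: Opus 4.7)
The plan is to prove surjectivity by taking $\hat\psi \in \Tc$, fixing a representative $\psi$ with $\psi|_{G_0} = \id$ and $\bar\psi = \id$ on $F=N/G_0$, and showing that $\psi$ equals a product $\prod_{j \in J} D^j_{n_j} \cdot \prod_{s \in S} D^s_{u_s}$ for appropriately chosen $n_j$ and $u_s$. Since $N$ is generated by $G_0 \cup \bigcup_{j \in J} N_j \cup S$, it suffices to analyze $\psi$ on each of these pieces and read off the parameters.

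First I would show that $\psi(N_j) = N_j$ for each $j \in J$: since $\psi(G_0) = G_0 \subset \psi(N_j)$ and $\psi(N_j)/G_0 = \bar\psi(N_j/G_0) = N_j/G_0$, and subgroups of $N$ containing $G_0$ are determined by their images in $F$, one concludes $\psi(N_j) = N_j$. Since $\hat\psi \in \Wc_\Nb = \ker \rho_\Nb$, the restriction $\psi|_{N_j}$ must be inner on $N_j$, say $\psi|_{N_j} = i_{n_j}$ for some $n_j \in N_j$. The condition $\psi|_{G_0} = \id$ then forces $n_j \in \Centr_{N_j}(G_0)$, while the fact that $\bar\psi$ acts trivially on $F_j = N_j/G_0$ forces $n_j \in K_j$. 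Hence $n_j \in \Centr_{K_j}(G_0)$.

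For each $s \in S$, the relation $\bar\psi(\bar s) = \bar s$ implies $\psi(s) = u_s s$ for some $u_s \in G_0$. Using the mapping-torus relation $sgs\inv \in G_0$ for every $g \in G_0$, together with $\psi|_{G_0} = \id$, one sees that $u_s$ commutes with $sgs\inv$ for every $g$, i.e.\ with every element of $G_0$ (since $s$ acts as an automorphism of $G_0$), so $u_s \in \Centr(G_0)$. Comparing $\psi$ with $\prod_{j \in J} D^j_{n_j} \cdot \prod_{s \in S} D^s_{u_s}$ on each element of the generating set shows that the two automorphisms agree, giving $\hat\psi = g\bigl((u_s),(n_j)\bigr)$.

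For the kernel assertion, if $n_j \in \Centr(N_j)$ then $i_{n_j}|_{N_j}$ is trivial, so $D^j_{n_j}$ is the identity automorphism of $N$ and $\widehat{D^j_{n_j}} = 1$; therefore every tuple of the form $\bigl((1)_{s \in S},(n_j)_{j \in J}\bigr)$ with $n_j \in \Centr(N_j)$ maps to the trivial element. The main point requiring care is in the surjectivity argument: extracting a conjugator $n_j$ that lies not merely in $N_j$ but in the smaller subgroup $\Centr_{K_j}(G_0)$. This is precisely where both hypotheses on the representative $\psi$---fixing $G_0$ pointwise and descending to the identity on $F$---are used together with the definition of $\Wc_\Nb$.
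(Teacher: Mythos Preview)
Your proof is correct and follows essentially the same approach as the paper's: choose a representative $\psi$ fixing $G_0$ pointwise and inducing the identity on $F$, read off conjugators $n_j\in\Centr_{K_j}(G_0)$ on each $N_j$ and elements $u_s\in\Centr(G_0)$ on the stable letters, and conclude $\psi=\prod D^l_{n_l}$. You spell out two details the paper leaves implicit, namely why $\psi(N_j)=N_j$ (via the correspondence between subgroups of $N$ containing $G_0$ and subgroups of $F$) and why $u_s$ lies in $\Centr(G_0)$ (via the mapping-torus relation $sG_0s^{-1}=G_0$), but the overall argument is the same.
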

		\begin{proof}
			Let $\hat \psi$ be an outer automorphism of $N$ in $\Tc$, and choose a representative $\psi$ of $\hat \psi$ that restricts to the identity on $G_0$ and such that $\tau(\psi)$ that acts as the identity on $F$. Choose elements $n_j$ such that $\psi|_{N_j} = i_{n_j}|_{N_j}$. Since $\tau^*(\psi)$ acts as the identity on $F$, this means that $n_j \in \Centr_{N_j}(G_0) \cap K_j = \Centr_{K_j}(G_0)$. It also implies that there exist elements $n_s$ in the center of $G_0$ such that $\psi(s) = n_ss$. Hence $\psi$ is the product of the automorphisms $D^l_{n_l}$ where $l$ ranges through $J \cup S$, and $g$ is surjective.
			
			If $n_j \in \Centr(N_j)$ then it is clear that that $D^j_{n_j} = \id_N$. Hence $(1)_{s \in S} \times \prod_{j \in J} \Centr(N_j)$ lies in the kernel of $g$.
		\end{proof}
		\begin{corollary}\label{corollary:ker_tau_G0_finite}
			If $G_0$ is finite and $N_j$ is finitely generated for each $j \in J$, then the kernel of $\tau^*$ is finite.
		\end{corollary}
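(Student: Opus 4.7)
The plan is to combine Lemma~\ref{lemma:ker_tau} with a counting argument for crossed homomorphisms. Since $G_0$ is finite, so is $\Out(G_0)$, and thus $\Tc$ has finite index in $\ker \tau^*$ by the remark preceding Lemma~\ref{lemma:ker_tau}. It therefore suffices to show that $\Tc$ is finite. By Lemma~\ref{lemma:ker_tau}, $\Tc$ is the image of $g$ and the subgroup $(1)_{s\in S}\times \prod_{j\in J}\Centr(N_j)$ lies in the kernel of $g$, so it suffices to bound $\prod_{s\in S}\Centr(G_0)\times \prod_{j\in J}\bigl(\Centr_{K_j}(G_0)/\Centr(N_j)\bigr)$. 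Since $\Bb$ is a finite graph, both $S$ and $J$ are finite, and $\Centr(G_0)\subset G_0$ is finite, so the factor $\prod_{s\in S}\Centr(G_0)$ is already finite.

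It remains to prove that for each $j\in J$ the quotient $\Centr_{K_j}(G_0)/\Centr(N_j)$ is finite. Here I would exploit the definition of $K_j$: for every $k\in K_j$ and $n\in N_j$ the commutator $[k,n] = knk\inv n\inv$ lies in $G_0$, so one obtains a function $\delta_k : N_j\to G_0$, $n\mapsto [k,n]$. A direct computation shows that $\delta_k$ is a crossed homomorphism for the action of $N_j$ on $G_0$ by conjugation: $\delta_k(nn') = \delta_k(n)\cdot n\delta_k(n')n\inv$. Hence $\delta_k$ is determined by its values on any fixed finite generating set of $N_j$, and since $G_0$ is finite only finitely many such $\delta_k$ arise. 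Finally $\delta_k = \delta_{k'}$ is equivalent to $k\inv k'$ centralizing $N_j$, so $k\mapsto \delta_k$ descends to an injection of $\Centr_{K_j}(G_0)/\Centr(N_j)$ into this finite set, establishing finiteness.

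The only step that actually requires care is verifying the cocycle identity and hence that a tuple of values on generators determines $\delta_k$ (and therefore determines $k$ modulo $\Centr(N_j)$); everything else is assembly of facts already established in Subsection~\ref{subsec:rho}, together with the hypotheses that $S$, $J$, $G_0$ and each $N_j$ provide the requisite finiteness.
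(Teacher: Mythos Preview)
Your proof is correct and follows essentially the same approach as the paper: both reduce to showing that $\Centr(N_j)$ has finite index in $\Centr_{K_j}(G_0)$ via the commutator map into the finite group $G_0$, exploiting finite generation of $N_j$. The only cosmetic difference is that you fix $k$ and view $n\mapsto[k,n]$ as a crossed homomorphism $N_j\to G_0$ determined by its values on a finite generating set, whereas the paper fixes each generator $n_i$ and observes that $k\mapsto[n_i,k]$ is a genuine homomorphism $\Centr_{K_j}(G_0)\to G_0$ (the condition that $k$ centralize $G_0$ is precisely what upgrades your cocycle to their homomorphism), with $\Centr(N_j)$ recovered as the intersection of the finitely many finite-index kernels.
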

		\begin{proof}
			In light of Lemma~\ref{lemma:ker_tau}, it suffices to show that if $G_0$ is finite and $N_j$ is finitely generated, then $\Centr(N_j)$ has finite index in $\Centr_{K_j}(G_0)$. Let $n_1,...,n_p$ be a finite generating set for $N_j$. Observe that $\Centr_{K_j}(G_0) = \Centr_{N_j}(G_0) \cap K_j$. Moreover, if $n \in N_j$ and $k \in K_j$, then the commutator $[n,k]:=nkn\inv k\inv$ lies in $G_0$ by definition of $K_j$. For each $1\leq i\leq p$, define a map $\beta_i: \Centr_{K_j}(G_0) \to G_0$ sending $k$ to $[n_i,k]$. This is a homomorphism since 
			\[\beta_i(k)\beta_i(k') = n_i k n_i\inv k\inv (n_i k' n_i\inv {k'}\inv) = n_i k n_i \inv (n_i k' n_i \inv {k'}\inv) k\inv = \beta_i(kk')\]
			The center of $N_j$ is a subgroup of $\Centr_{K_j}(G_0)$ and is exactly the intersection of the kernels of all $\beta_i$, which are all finite index subgroups of $\Centr_{K_j}(G_0)$ since $G_0$ is finite. Hence $\Centr(N_j)$ has finite index in $\Centr_{K_j}(G_0)$ for each $j \in J$.
		\end{proof}
		
		If $G_0$ is not finite or there is an $N_j$ which is not finitely generated, there is still some control over $\Tc$, hence over $\ker \tau^*$ if $\Out(G_0)$ is finite. Indeed, the following proposition holds:
		\begin{proposition}[{\cite[Proposition 3.1]{Levitt}}]\label{proposition:seq_ker_tau} If $\Nb$ is minimal and not a mapping torus (i.e.\ $|J|+|S|\geq 2$) there is an exact sequence
			\[1 \to \Centr(N) \to \Centr(G_0) \times \prod_{s \in S} \Centr(G_0) \times \prod_{j \in J} \Centr(N_j) \stackrel{f}{\to} \prod_{s \in S\epm} \Centr(G_0) \times \prod_{j\in J} \Centr_{K_j}(G_0) \stackrel{g}{\to} \Tc \to 1.\]
			\qedhere
		\end{proposition}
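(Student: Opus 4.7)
The map $g$ is defined by composing Dehn twists and extends the homomorphism from Lemma~\ref{lemma:ker_tau}: for each oriented loop edge in $\Nb$ one records an independent parameter in $\Centr(G_0)$, reflecting the fact that an HNN extension $A\ast_C$ admits both a ``left'' twist $t\mapsto zt$ with $z\in\Centr_A(\alpha(C))$ and a ``right'' twist $t\mapsto tz$ with $z\in\Centr_A(\omega(C))$, both centralizers here equal to $\Centr(G_0)$. Surjectivity of $g$ is therefore inherited from Lemma~\ref{lemma:ker_tau}, since the domain used there embeds into the extended one.

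I would build $f$ as the sum of three pieces matching the three factors of its domain. For $\alpha\in\Centr(G_0)\subseteq N$, the piece $f_1(\alpha)$ encodes the decomposition of the inner automorphism $i_\alpha$ of $N$ as a product of Dehn twists: contribution $\alpha$ at each $j\in J$ and the pair $(\alpha,\alpha^{-1})$ at each oriented loop $(s,s^{-1})$. For $u\in\Centr(G_0)$ attached to some $s\in S$, the piece $f_2(u)$ records the ambiguity between left and right twists on that single loop via the identity $(z^+\phi_s(u))\cdot s\cdot(u^{-1}z^-)=z^+sz^-$, where $\phi_s$ denotes conjugation by $s$ (which preserves $G_0$); thus $f_2(u)$ is the tuple with entries $(\phi_s(u),u^{-1})$ at positions $(s,s^{-1})$ and trivial elsewhere. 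For $z_j\in\Centr(N_j)$, the piece $f_3(z_j)$ records that $D^j_{z_j}$ is the trivial automorphism whenever $z_j$ is central in $N_j$. Each piece manifestly maps into $\ker g$.

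For the opposite inclusion $\ker g\subseteq\image f$, start with a tuple whose associated product $\psi=\prod_l D^l_{n_l}$ of Dehn twists is inner, say $\psi=i_\beta$ with $\beta\in N$. Since $|J|+|S|\geq 2$ and each factor $F_j$ is nontrivial (as $N_j\neq G_0$ for $j\in J$), the free product $F$ has trivial center, whence $\beta\in G_0$; as $\psi$ fixes $G_0$ pointwise, in fact $\beta\in\Centr(G_0)$. Comparing $\psi$ with $i_\beta$ on each vertex group $N_j$ yields $z_j:=\beta^{-1}n_j\in\Centr(N_j)$, and comparing on each stable letter $s$ extracts $u_s:=\beta^{-1}(z^-_s)^{-1}\in\Centr(G_0)$ so that the given tuple equals $f(\beta,(u_s),(z_j))$.

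It remains to identify $\ker f$. A direct calculation shows that $f(\alpha,(u_s),(z_j))=0$ forces $u_s=\alpha^{-1}$ (i.e.\ $\alpha$ commutes with each $s$) and $z_j=\alpha^{-1}$ (i.e.\ $\alpha$ is central in each $N_j$), which together with $\alpha\in\Centr(G_0)$ is equivalent to $\alpha\in\Centr(N)$; this yields the injection $\Centr(N)\hookrightarrow\Centr(G_0)\times\prod_S\Centr(G_0)\times\prod_J\Centr(N_j)$, $\alpha\mapsto(\alpha,(\alpha^{-1})_s,(\alpha^{-1})_j)$. The most delicate step is identifying $\beta\in\Centr(G_0)$ in the third step: this is exactly where the hypothesis that $\Nb$ is not a mapping torus (equivalently $Z(F)=1$) is used essentially, and unraveling the left/right twist redundancy to recover the $u_s$ hinges on the precise form of the conjugation automorphism $\phi_s$ of $G_0$ induced by the stable letter.
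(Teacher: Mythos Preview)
The paper does not give its own proof of this proposition: it is quoted from \cite{Levitt} with a \verb|\qedhere| and no argument. Your reconstruction is correct and follows the natural (and, as far as I can tell, Levitt's) approach: build $g$ from Dehn twists, build $f$ from the three obvious sources of redundancy (global inner automorphisms by elements of $\Centr(G_0)$, the left/right twist ambiguity on each loop, and twists by central elements of $N_j$), and then chase the kernels.

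A couple of small remarks for precision. First, when you invoke Lemma~\ref{lemma:ker_tau} for surjectivity of $g$, note that the map there has domain indexed by $S$ rather than $S^{\pm1}$; you correctly observe that enlarging the domain preserves surjectivity, but it is worth saying explicitly that the extra $\Centr(G_0)$ factors correspond to right twists $s\mapsto s n_{s^{-1}}$. Second, your key step---deducing $\beta\in G_0$ from $\Centr(F)=1$---is exactly right and is the only place the hypothesis $|J|+|S|\ge 2$ enters; you might add one line noting that $F_j\ne 1$ for $j\in J$ (by definition of $J$) so that $F$ is genuinely a nontrivial free product. Finally, in the $\ker f$ computation, the two loop conditions $\phi_s(u_s)=\alpha^{-1}$ and $u_s=\alpha^{-1}$ combine to give $\phi_s(\alpha)=\alpha$, i.e.\ $\alpha$ commutes with $s$; together with $\alpha\in\Centr(N_j)$ for all $j$ this is precisely $\alpha\in\Centr(N)$, as you say.
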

		
		\subsection{Labellings and the image of $\tau^*$}
		
		Let $\chi:F \to \Out(G_0)$ be the homomorphism defined by $f \mapsto \widehat{i_{\tilde f}|_{G_0}}$ for some lift $\tilde f \in N$ of $f$. In order to describe the image of $\tau^*$ we introduce the labelling of an outer automorphism $\hat \psi \in \Wc_\Fb$ as follows. First, choose a representative $\psi \in \hat \psi$. For each $j\in J$, choose an element $f^\psi_j$ of $F$ such that $\psi|_{F_j}=i_{f^\psi_j}|_{F_j}$ and let $f^\psi_s = \psi(s)$ for each $s \in S$. Such a choice of $f^\psi_l$ for $l \in J \cup S$ is called a \emph{defining tuple} for $\psi$. The \emph{labelling $\lab(\hat \psi)$ of $\hat \psi$} is the subset of $\prod_{l \in J \cup S} \Out(G_0)$ consisting of all tuples $(\chi(f^\psi_l))_{l \in J \cup S}$ obtained from all defining tuples $f^\psi_l$ for all representatives $\psi$ of $\hat\psi$.
		
		\begin{lemma} \label{lemma:lab_intersection}
			For any outer automorphisms $\hat \psi, \hat \psi'$ in $\Wc_\Fb$, either the labellings $\lab(\hat \psi)$ and $\lab(\hat \psi')$ do not intersect or they are equal.
			%A labelling is an orbit in $\prod_{l \in J \cup S} \Out(G_0)$ under the actions of $F$ and of the center of each $F_j$. \qed
		\end{lemma}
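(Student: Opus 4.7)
The plan is to show that each labelling $\lab(\hat\psi)$ is a single orbit of a fixed group action on $\prod_{l \in J \cup S} \Out(G_0)$ that does not depend on $\hat\psi$; since two orbits of a group action are either disjoint or equal, the lemma then follows at once.

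The first step is to describe the set of defining tuples for a fixed $\hat\psi$ as an orbit inside $F^{J \cup S}$. Fix a representative $\psi_0$ of $\hat\psi$ and a defining tuple $(f^{\psi_0}_l)_l$. Any other representative is of the form $\psi = i_h \circ \psi_0$ for some $h \in F$, and a direct computation shows that $(h f^{\psi_0}_j, h f^{\psi_0}_s h\inv)_l$ is a defining tuple for $\psi$; moreover, any other defining tuple for the same $\psi$ is obtained by replacing the $j$-th coordinate by $f_j z_j$ for some $z_j \in \Centr_F(F_j)$, since $i_f|_{F_j} = i_{fz}|_{F_j}$ precisely when $z \in \Centr_F(F_j)$, and there is no ambiguity at the $s$-coordinates since $f^\psi_s = \psi(s)$ is determined. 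Hence the set of all defining tuples over all representatives of $\hat\psi$ is exactly the orbit of $(f^{\psi_0}_l)$ in $F^{J \cup S}$ under the action of $F \times \prod_{j \in J} \Centr_F(F_j)$ defined by
\[ (h, (z_j)) \cdot (f_l)_l = (h f_j z_j,\, h f_s h\inv)_l. \]

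Next, one checks that $\chi : F \to \Out(G_0)$ is a group homomorphism (two lifts of $f \in F$ to $N$ differ by an element of $G_0$, yielding the same outer automorphism of $G_0$). Applying $\chi$ coordinatewise, the action above descends to an action of $\chi(F) \times \prod_{j\in J} \chi(\Centr_F(F_j))$ on $\prod_{l\in J\cup S}\Out(G_0)$, given by left multiplication on the $j$-coordinates, conjugation on the $s$-coordinates, and right multiplication by $\chi(z_j)$ on the $j$-th coordinate. By definition $\lab(\hat\psi)$ is the image under $\chi^{J \cup S}$ of the orbit described above, hence it coincides with the orbit of $(\chi(f^{\psi_0}_l))_l$ under this induced action.

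The essential point is that the acting group and its action depend only on $F$, the subgroups $\Centr_F(F_j)$, and the map $\chi$, not on $\hat\psi$. Therefore $\lab(\hat\psi)$ and $\lab(\hat\psi')$ are both orbits of one and the same group action on $\prod_{l}\Out(G_0)$, so they are either disjoint or equal. The only subtle point is verifying that the defining tuples form exactly the orbit claimed in step one (neither more nor less), but this is a routine calculation using the free product decomposition $F = (\ast_j F_j) \ast F(S)$ and the fact that any two representatives of $\hat\psi$ differ by an inner automorphism of $F$.
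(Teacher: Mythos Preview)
Your proof is correct and follows essentially the same approach as the paper's. Both arguments show that $\lab(\hat\psi)$ is obtained from any one of its elements by operations (left-multiplying by $\chi(g)$ on the $j$-coordinates and conjugating on the $s$-coordinates for $g\in F$, and right-multiplying the $j$-th coordinate by an element of $\chi(\Centr(F_j))$) that are independent of $\hat\psi$; you phrase this as an orbit of a fixed group action, while the paper phrases it as closure under a fixed set of operations, but the content is identical. One cosmetic remark: in a free product with nontrivial factor $F_j$ one has $\Centr_F(F_j)=\Centr(F_j)$, so your use of $\Centr_F(F_j)$ agrees with the paper's ``center of $F_j$''.
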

		\begin{proof}
			Suppose that $\psi$ is a representative of $\hat\psi$, and let $(f_l)_{l \in J \cup S}$ be a defining tuple for $\psi$. Then $\tilde f_l$ is another defining tuple for $\psi$ if and only if $f_j \inv \tilde f_j$ lies in the center of $F_j$ for each $j\in J$ and $f_s = \tilde f_s$.
			Moreover, all other representatives of $\hat\psi$ are of the form $\bar\psi = i_g \psi$ for some $g\in F$, and in that case $\bar f_j := g f_j$ and $\bar f_s := g f_s g\inv$ is a defining tuple for $\bar \psi$.
			
			Therefore, if $(\chi_l)_{l\in J \cup S}$ is an tuple in a labelling $\lab(\hat \psi)$ then all other tuples in $\lab(\hat \psi)$ are obtained by performing all finite sequences of the following operations:
			\begin{itemize}
				\item Replace $(\chi_l)_{l\in J \cup S}$ by $((\chi(g)\chi_j)_{j \in J},(\chi(g)\chi_s\chi(g\inv))_{s \in S})$ For some $g\in F$.
				\item Replace the $j\text{th}$ entry of $(\chi_l)_{l\in J \cup S}$ by $\chi_j\chi(g_j)$ for some element $g_j$ in the center of $F_j$.
			\end{itemize}
			This finishes the proof as this way to obtain all tuples in a labelling from one given tuple does not depend on $\hat \psi$.
		\end{proof}
		
		\begin{lemma} \label{lemma:stab_lab_id}
			Any element of $\Wc_\Fb$ having the same labelling as the identity lies in the image of $\tau^*$.  
		\end{lemma}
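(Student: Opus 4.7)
The plan is to construct an automorphism $\Psi\in\Aut(N)$ that restricts to the identity on $G_0$, acts as an inner automorphism of $N_j$ on each $N_j$, and descends under $\pi\colon N\to F=N/G_0$ to $\psi$; its outer class then lies in $\Wc_\Nb$ and satisfies $\tau^*(\hat\Psi)=\hat\psi$. The construction proceeds in two steps: first pick a defining tuple for $\hat\psi$ with a particularly nice $\chi$-image, then use it to produce appropriate lifts to $N$ defining $\Psi$.

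To start, I would normalize the defining tuple. The identity of $F$ admits $((1)_{j\in J},(s)_{s\in S})$ as a defining tuple, whose image under $\chi$ is $((1)_{j\in J},(\chi(s))_{s\in S})$. Since $\lab(\hat\psi)=\lab(\id)$, the two modifications identified in the proof of Lemma~\ref{lemma:lab_intersection} (changing the representative by conjugation by some $g\in F$, and multiplying $f^\psi_j$ by an element of $Z(F_j)$) let me produce a representative $\psi$ with a defining tuple satisfying $\chi(f^\psi_j)=1$ for each $j\in J$ and $\chi(f^\psi_s)=\chi(s)$ for each $s\in S$; since $\psi\in\Wc_\Fb$, I may moreover arrange $f^\psi_j\in F_j$. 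The condition $\chi(f^\psi_j)=1$ then says that for any lift $\tilde n_j\in N_j$ of $f^\psi_j$ the conjugation $i_{\tilde n_j}|_{G_0}$ equals $i_{a_j}|_{G_0}$ for some $a_j\in G_0$; setting $n_j:=a_j\inv\tilde n_j\in N_j$ gives a lift of $f^\psi_j$ lying in $\Centr_{N_j}(G_0)$. Analogously, $\chi(f^\psi_s)=\chi(s)$ yields a lift $n_s\in N$ of $f^\psi_s$ with $n_s s\inv\in\Centr_N(G_0)$.

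Now define $\Psi\colon N\to N$ by $\Psi|_{G_0}=\id$, $\Psi|_{N_j}=i_{n_j}|_{N_j}$ for $j\in J$, and $\Psi(s)=n_s$ for $s\in S$. The condition $n_j\in\Centr_{N_j}(G_0)$ is exactly the compatibility between $\Psi|_{G_0}$ and $\Psi|_{N_j}$ at the non-loop edge $e_j$ of $\Nb$, while $n_s s\inv\in\Centr_N(G_0)$ is exactly what is needed for $\Psi(s)$ to satisfy the HNN relation attached to the loop $e_s$ (using that $\omega_{e_s}(G_0)=G_0$ since $G_0$ is finite). Hence $\Psi$ is a well-defined endomorphism of $N=\pi_1(\Nb,b_0)$; a diagram chase on the short exact sequence $1\to G_0\to N\to F\to 1$, together with $\Psi|_{G_0}=\id$ and the fact that $\Psi$ descends to $\psi$ on $F$ (since $\pi(n_j)=f^\psi_j$ and $\pi(n_s)=f^\psi_s$), forces $\Psi\in\Aut(N)$. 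By construction $\hat\Psi\in\Wc_\Nb$ and $\tau^*(\hat\Psi)=\hat\psi$, as required. The main obstacle is the translation carried out in the second half of the normalization step: cashing in the vanishing of $\chi$ at the level of labellings for pointwise centralizer properties of concrete lifts in $N$; once $n_j$ and $n_s$ are in hand, the verification of the edge relations of $\Nb$ is automatic.
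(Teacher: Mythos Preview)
Your approach is the paper's: pick a defining tuple for $\hat\psi$ whose $\chi$-image matches that of the identity, lift to $N$, correct by elements of $G_0$ to land in the centralizer of $G_0$, and assemble $\Psi$ on the presentation of $N$ coming from $\Nb$. The diagram chase on $1\to G_0\to N\to F\to 1$ to deduce that $\Psi$ is bijective is also exactly what the paper does.

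There is, however, one false step. You claim you may arrange $f^\psi_j\in F_j$; this is not possible in general. Once the representative $\psi$ is fixed, the element $f^\psi_j$ is determined up to $\Centr_F(F_j)=Z(F_j)\subset F_j$, so $f^\psi_j\in F_j$ holds iff $\psi(F_j)=F_j$. Achieving this for all $j$ at once by passing to another representative $i_g\psi$ would require a single $g\in F$ with $gf^\psi_j\in F_j$ for every $j$, and this already fails for partial conjugations in a free product of three factors with trivial center (take $\psi$ conjugating $F_1$ by some $a_2\in F_2\setminus\{1\}$, conjugating $F_2$ by some $a_3\in F_3\setminus\{1\}$, and fixing $F_3$). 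When $\chi$ is trivial this is a counterexample satisfying your labelling hypothesis.

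Fortunately the claim is superfluous. The compatibility at the non-loop edge $e_j$ only asks that $i_{n_j}|_{G_0}=\id$, i.e.\ $n_j\in\Centr_N(G_0)$, not $n_j\in\Centr_{N_j}(G_0)$. So simply lift $f^\psi_j\in F$ to $\tilde n_j\in N$, correct by $a_j\in G_0$ exactly as you do, and let $\Psi|_{N_j}=i_{n_j}|_{N_j}$ be a homomorphism $N_j\to N$ with image the conjugate $n_jN_jn_j^{-1}$; the presenting relations of $N$ are satisfied for the same reasons, and one still has $\hat\Psi\in\Wc_\Nb$ with $\tau^*(\hat\Psi)=\hat\psi$. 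This is precisely the paper's construction.
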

		\begin{proof}
			Suppose that $\hat\psi$ has the same labelling as the identity. Thus there is a representative $\psi$ of $\hat \psi$ and elements $f_l \in F$ such that $\chi(f_j) = \widehat{\id|_{G_0}}$ and such that $\chi(f_s) = \widehat{i_s|_{G_0}}$. Thus if $n_l$ are lifts in $N$ of $f_l$, then there are elements $g_l$ in $G_0$ such that $i_{n_jg_j\inv}|_{G_0} = \id|_{G_0}$ and such that $i_{n_sg_s\inv}|_{G_0} = i_s|_{G_0}$. Define a homomorphism $\varphi:N\to N$ on a generating set of $N$ as follows:
			\[ \varphi|_{N_j} = i_{n_jg_j\inv}|_{N_j} \text{ and } \varphi(s)=n_sg_s\inv\]
			This choice ensures that the image of the generating set satisfy the relations presenting $N$ so that this map extends to an endomorphism of $N$. Denoting by $\pi$ the projection of $N$ on $N/G_0 = F$, it follows from the construction of $\phi$ that $\pi \circ \varphi = \psi \circ \pi$. Therefore, $\pi(\ker \varphi) \subset \ker \psi = \{\id\}$ and $\pi(\image \varphi) \supset \image \psi = F$. Since $\varphi$ is injective on $G_0$, it follows that $\varphi$ is an automorphism of $N$, and $\tau^*(\hat \varphi) = \hat\psi$ by construction.
		\end{proof}
		\begin{proposition} \label{proposition:action_well_def}
			$\Wc_\Fb$ acts from the right on the set of labellings by the formula \[\lab(\hat \psi_1) \cdot \hat \psi_2 := \lab(\hat \psi_1 \hat \psi_2)\]
		\end{proposition}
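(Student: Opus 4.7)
The key observation is that the right-action axioms (associativity and compatibility with the identity) follow formally from the corresponding properties in $\Wc_\Fb$ once the formula is shown to be well defined, so the entire content of the proposition is the implication: if $\lab(\hat \psi_1) = \lab(\hat \psi_1')$, then $\lab(\hat \psi_1 \hat \psi_2) = \lab(\hat \psi_1' \hat \psi_2)$ for every $\hat \psi_2 \in \Wc_\Fb$. By Lemma~\ref{lemma:lab_intersection}, two labellings are equal as soon as they share a single element, so it will suffice to produce one tuple lying in both $\lab(\hat \psi_1 \hat \psi_2)$ and $\lab(\hat \psi_1' \hat \psi_2)$.

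My plan is to begin by choosing representatives $\psi_1, \psi_1'$ of $\hat \psi_1, \hat \psi_1'$ together with defining tuples $(f_l)_{l \in J \cup S}$ and $(f_l')_{l \in J \cup S}$ such that $\chi(f_l) = \chi(f_l')$ for every $l$, which is possible precisely because the labellings coincide. Fix any representative $\psi_2$ of $\hat \psi_2$ with defining tuple $(g_l)$. A direct computation, using the identities $\psi_2|_{F_j} = i_{g_j}|_{F_j}$, $\psi_2(s) = g_s$ and $\psi_1|_{F_j} = i_{f_j}|_{F_j}$, yields the defining tuple
\[ f_j^{\psi_1 \psi_2} = \psi_1(g_j)\,f_j, \qquad f_s^{\psi_1 \psi_2} = \psi_1(g_s) \]
for $\psi_1 \psi_2$, and analogous formulas hold for $\psi_1' \psi_2$.

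The heart of the proof is then the identity $\chi \circ \psi_1 = \chi \circ \psi_1'$ as homomorphisms $F \to \Out(G_0)$. Both sides are homomorphisms, and the decomposition $\Fb$ writes $F$ as $(\ast_{j \in J} F_j) \ast F(S)$, so verification reduces to checking equality on the generating set $\bigl(\bigcup_{j \in J} F_j\bigr) \cup S$. For $x \in F_j$, using that $\chi$ is a homomorphism,
\[ \chi(\psi_1(x)) = \chi(f_j\, x\, f_j\inv) = \chi(f_j)\,\chi(x)\,\chi(f_j)\inv, \]
and analogously for $\psi_1'$; these are equal because $\chi(f_j) = \chi(f_j')$. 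For $s \in S$, $\chi(\psi_1(s)) = \chi(f_s) = \chi(f_s') = \chi(\psi_1'(s))$. Applying this identity to the elements $g_l$ and combining with $\chi(f_l) = \chi(f_l')$ shows that the $\chi$-images of the two product defining tuples coincide, producing the desired common element. The main delicate point is the product formula in the second step: one must avoid assuming that $\psi_1$ preserves each $F_j$ setwise, but since the relation $\psi_1|_{F_j} = i_{f_j}|_{F_j}$ holds as a literal equality of elements of $F$ regardless, the conjugation computation carries through without issue.
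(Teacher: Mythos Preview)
Your proposal is correct and follows essentially the same approach as the paper's proof: reduce to well-definedness, pick representatives with matching $\chi$-values on defining tuples, compute the defining tuple for the composite as $(\psi_1(g_j)f_j,\ \psi_1(g_s))$, and then verify $\chi\circ\psi_1=\chi\circ\psi_1'$ on the generating set $\bigcup_j F_j \cup S$ before invoking Lemma~\ref{lemma:lab_intersection}. The only difference is notational, and your closing remark about not needing $\psi_1$ to preserve $F_j$ setwise is a worthwhile clarification that the paper leaves implicit.
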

		\begin{proof}
			Assume for a moment that this is well-defined. Then the formula indeed defines an action since
			\[(\lab(\hat \psi_1) \cdot \hat\psi_2) \cdot \hat\psi_3 = \lab(\hat \psi_1 \hat\psi_2 \hat\psi_3) = \lab(\hat \psi_1) \cdot (\hat\psi_2 \hat \psi_3)\]
			
			Let us now show that the formula is well defined. In other words, we have to show that if $\hat \psi, \hat \psi'$ and $\hat \psi^*$ are outer automorphisms in $\Wc_\Fb$ such that $\hat \psi$ and $\hat \psi'$ have the same labelling, then $\hat \psi \hat \psi^*$ and $\hat \psi' \hat \psi^*$ have the same labelling.
			Let $\psi$, $\psi'$, $\psi^*$ and $f_l$,$f_l'$,$f_l^*$ be defining tuples for $\hat \psi$, $\hat \psi'$, $\hat \psi^*$ respectively, and suppose that $\chi(f_l) = \chi(f_l')$ for each $l$. The automorphisms $\bar \psi := \psi \psi^*$ and $\bar \psi' := \psi' \psi^*$ are representatives of $\hat \psi \hat \psi^*$ and $\hat \psi' \hat \psi^*$ respectively. Computation shows that the elements $\bar f_j := \psi(f_j^*) f_j$ and $\bar f_s := \psi(f_s^*)$ are such that $\bar \psi|_{F_j} = i_{\bar f_j}|_{F_j}$ and such that $\bar \psi(s) = \bar f_s$. Similarly the elements $\bar f_j' := \psi'(f_j^*) f_j'$ and $\bar f_s' := \psi'(f_s^*)$ are a defining tuple for $\bar \psi'$.
			
			Let us show that $\chi(\psi(g)) = \chi(\psi'(g))$ for any $g\in F$. It suffices to show it for a generating set of $F$. If $g$ lies in $F_j$ for some $j \in J$, then 
			\[\chi(\psi(g)) = \chi(f_j)\chi(g)\chi(f_j\inv) = \chi(f_j')\chi(g)\chi({f'}_j\inv) = \chi(\psi'(g)).\]
			If $g = s$ for some $s \in S$, then
			\[\chi(\psi(g)) = \chi(f_s) = \chi(f_s') = \chi(\psi'(g)).\]
			It follows that $\chi(\bar f_l) = \chi (\bar f_l')$. Thus the labellings of $\hat \psi \hat \psi^*$ and $\hat \psi' \hat \psi^*$ intersect, hence they must coincide by Lemma~\ref{lemma:lab_intersection}.
		\end{proof}
		\begin{corollary} \label{corollary:tau_virt_epi}
			The image of $\tau^*$ has finite index in $\Wc_\Fb$.
		\end{corollary}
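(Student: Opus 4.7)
The plan is to exploit the action of $\Wc_\Fb$ on the set of labellings established in Proposition~\ref{proposition:action_well_def}, combined with Lemma~\ref{lemma:stab_lab_id}. The first step is to observe that since the standing assumption forces edge groups of $\bar\Bb$ to be finite, $G_0$ is finite, so $\Out(G_0)$ is finite. Because $J \cup S$ is a finite index set (the graph $B$ is finite), the product $\prod_{l \in J \cup S} \Out(G_0)$ is finite, and consequently the set of all possible labellings, which are subsets of this finite product, is finite.

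Next I would use that the action $\lab(\hat\psi_1) \cdot \hat\psi_2 = \lab(\hat\psi_1 \hat\psi_2)$ satisfies $\lab(\id) \cdot \hat\psi = \lab(\hat\psi)$ for every $\hat\psi \in \Wc_\Fb$. Therefore the $\Wc_\Fb$-orbit of $\lab(\id)$ is exactly the set of all labellings arising from elements of $\Wc_\Fb$, which by the previous step is finite. By orbit–stabilizer, the stabilizer $\Sigma := \{\hat\psi \in \Wc_\Fb : \lab(\hat\psi) = \lab(\id)\}$ has finite index in $\Wc_\Fb$.

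Finally, Lemma~\ref{lemma:stab_lab_id} directly gives $\Sigma \subseteq \image \tau^*$, so $\image \tau^*$ has finite index in $\Wc_\Fb$.

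There is really no obstacle here beyond assembling the three ingredients (finiteness of $G_0$, the transitive shape of the action, and Lemma~\ref{lemma:stab_lab_id}); the only subtlety worth double-checking is that a ``labelling'' is genuinely a subset of the finite set $\prod_{l \in J \cup S} \Out(G_0)$, so that the count of labellings is bounded above by $2^{|\Out(G_0)|^{|J \cup S|}}$, and in particular is finite.
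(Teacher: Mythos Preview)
Your argument is correct and follows exactly the paper's approach: finiteness of $\Out(G_0)$ forces the set of labellings to be finite, the stabilizer of $\lab(\id)$ under the action of Proposition~\ref{proposition:action_well_def} is precisely $\{\hat\psi : \lab(\hat\psi)=\lab(\id)\}$, and Lemma~\ref{lemma:stab_lab_id} then places this finite-index subgroup inside $\image\tau^*$. Your write-up simply spells out the orbit--stabilizer step and the finiteness count more explicitly than the paper does.
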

		\begin{proof}
			Since $\Out(G_0)$ is finite, then so is the set of labellings. An automorphism $\hat \varphi$ has the same labelling as the identity if and only if it is in the stabilizer of $\lab(\id)$ for the action defined in proposition \ref{proposition:action_well_def}. Hence, it follows from Lemma~\ref{lemma:stab_lab_id} that the image of $\tau^*$ has finite index in $\Wc_\Fb$.
		\end{proof}
		
		\subsection{Conclusion} A short sequence $A \stackrel{\lambda}{\hookrightarrow} C \stackrel{\mu}{\rightarrow} B$ is \emph{virtually exact} if $\lambda(A)$ is a finite index subgroup of $\ker \mu$ and if $\mu(C)$ has finite index in $B$. The results of this section can be summarized by the following
			\begin{proposition} \label{proposition:main_tech}
				If we let $D=\ker \sigma^*$ and suppose condition \ref{C2} holds, then the following short sequences
				\[\Wc_\Bb \hookrightarrow \Out^0(G) \stackrel{\rho_\Bb}{\longrightarrow}  \prod_{v \in VB} \Out_{\Hc_v}(G_v) \text{ and } D \hookrightarrow \Kc_\Bb \stackrel{\tau^*\circ \sigma^*}{\longrightarrow} \Kc_\Fb\]
				 are virtually exact. If condition \ref{C1} holds, $\Out^0(G)$ has finite index in $\Out_\Hc(G)$.
			\end{proposition}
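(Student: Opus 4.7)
The plan is to assemble Corollaries~\ref{corollary:im_sigma_im_rho} and~\ref{corollary:ker_tau_G0_finite} together with Corollary~\ref{corollary:tau_virt_epi} in a purely formal diagram chase; each ingredient is already isolated in the section. The key observation is that because edge groups of $\Ab$ are finite, the distinguished edge group $G_0$ is finite, hence $\Out(G_0)$ is finite, which is exactly the hypothesis under which Corollaries~\ref{corollary:im_sigma_im_rho} and~\ref{corollary:tau_virt_epi} give finite-index image statements.

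For the first sequence, note that $\Wc_\Bb$ is by definition the kernel of $\rho_\Bb$, so exactness at $\Out^0(G)$ is automatic. Since $G_0$ is finite, the second assertion of Corollary~\ref{corollary:im_sigma_im_rho} applies and the image of $\rho_\Bb$ has finite index in $\prod_{v\in V}\Out_{\Hc_v}(G_v)$, yielding virtual exactness at the right end.

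For the second sequence, I would first use Corollary~\ref{corollary:im_sigma_im_rho} to assert that $\sigma^{\ast}:\Wc_\Bb\to\Wc_\Nb$ is surjective. This gives the identification
\[ \image(\tau^{\ast}\circ\sigma^{\ast}) \;=\; \tau^{\ast}(\sigma^{\ast}(\Wc_\Bb)) \;=\; \tau^{\ast}(\Wc_\Nb), \]
which has finite index in $\Wc_\Fb$ by Corollary~\ref{corollary:tau_virt_epi} (again using that $\Out(G_0)$ is finite). For the kernel end, surjectivity of $\sigma^{\ast}$ combined with $D=\ker\sigma^{\ast}$ yields a short exact sequence
\[ 1 \to D \to \ker(\tau^{\ast}\circ\sigma^{\ast}) \stackrel{\sigma^{\ast}}{\to} \ker\tau^{\ast} \to 1. \]
Now condition \ref{C2} guarantees that each $N_j$ is finitely generated (being the normalizer of an edge group in a vertex group), and $G_0$ is finite; so Corollary~\ref{corollary:ker_tau_G0_finite} forces $\ker\tau^{\ast}$ to be finite. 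Hence $D$ has finite index in $\ker(\tau^{\ast}\circ\sigma^{\ast})$, giving virtual exactness at the left end.

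The last claim, that \ref{C1} implies $\Out^0(G)$ has finite index in $\Out_\Hc(G)$, was already recorded in the discussion defining $\Out^0(G)$ in Subsection~\ref{subsec:rho}: under \ref{C1}, each vertex group $G_v$ has only finitely many $G_v$-conjugacy classes of elements of $\Hc_v$, so $\Out^0(G)$ is cut out of $\Out_\Hc(G)$ by a homomorphism to a finite product of finite symmetric groups. No step is really the main obstacle; all the substantive work lies in the corollaries cited, and the proposition just packages them. The only point requiring mild attention is to check that the kernel of $\sigma^{\ast}$ restricted to $\ker(\tau^{\ast}\circ\sigma^{\ast})$ really surjects onto $\ker\tau^{\ast}$, which is immediate from surjectivity of $\sigma^{\ast}$ on all of $\Wc_\Bb$.
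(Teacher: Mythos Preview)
Your proof is correct and follows essentially the same approach as the paper's, assembling Corollaries~\ref{corollary:im_sigma_im_rho}, \ref{corollary:ker_tau_G0_finite}, and \ref{corollary:tau_virt_epi} in the intended way; your explicit short exact sequence $1\to D\to\ker(\tau^*\circ\sigma^*)\to\ker\tau^*\to 1$ is just a slightly more detailed unpacking of the same step. (In your final sentence you presumably mean that $\sigma^*$ restricted to $\ker(\tau^*\circ\sigma^*)$ surjects onto $\ker\tau^*$, not ``the kernel of $\sigma^*$''.)
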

			\begin{proof}
				$\Wc_\Bb = \ker \rho_\Bb$ by definition. $\rho_\Bb$ is virtually surjective by the second part of Corollary~\ref{corollary:im_sigma_im_rho}. Condition \ref{C2} ensures that the normalizer of any edge group in a vertex group is finitely generated. Thus Corollary~\ref{corollary:ker_tau_G0_finite} applies and $\ker\tau^*$ is finite. Therefore $D = \ker \sigma^*$ has finite index in $\ker (\tau^* \circ \sigma^*)$. Finally $\tau^* \circ \sigma^*$ is virtually surjective by combining the first part of Corollary~\ref{corollary:im_sigma_im_rho} with Corollary~\ref{corollary:tau_virt_epi}. 
				
				The last assertion was mentioned when defining $\Out^0(G)$.
			\end{proof}
			Remark that Lemma~\ref{lemma:ker_sigma} describes the structure of the group of Dehn twists $D$. A presentation of $\Wc_\Fb$ is discussed in Theorem~\ref{theorem:F-R}.
		
	\section{Finite presentation for $\Out(G)$} \label{sec:finite_pres}
		
		Throughout this section, we use without mentioning it the following basic facts about finite presentations. If $H$ is a finite index subgroup of $G$, then $H$ is finitely presented if and only if $G$ is. If $G$ is a finitely presented group and $N$ is a finitely generated normal subgroup, then $G/N$ is finitely presented. If $Q=G/N$ and both $N$ and $Q$ are finitely presented, then so is $G$.
		
		In view of Proposition \ref{proposition:main_tech}, a key step in understanding presentations of automorphism groups of accessible groups is to understand presentations of the automorphism group of a free product in the guise of the group $\Wc_\Fb$. We forget a moment about Section~\ref{sec:structure_out} and suppose we are given a free product $F = (\ast_{j \in J} F_j) \ast F(S)$ where $S$ and $J$ are finite, and $F(S)$ is the free group on the set $S$. Consider the following subgroups of $\Aut(F)$:
		\begin{itemize}
			\item $\Aut^0(F)$ is the group of automorphisms which maps each $F_j$ to a conjugate of itself (setwise). 
			\item The group $\Aut_f(F)$ of \emph{factor automorphisms} is the subgroup of $\Aut^0(F)$ which maps each $F_j$ to itself, and maps each $s \in S$ to either $s$ or $s\inv$.
			\item $\Aut_i(F)$ is the subgroup of $\Aut^0(F)$ which acts as a conjugation on each $F_j$.
		\end{itemize}
		We briefly comment on these groups. In the special case where the factors $F_j$ are freely indecomposable, not infinite cyclic and pairwise non-isomorphic, $\Aut^0(F)$ is the full automorphism group $\Aut(F)$. In general, $\Aut^0(F) \subset \Aut(F)$ is the preimage of $\Out_\Fc(F) \subset \Out(F)$ where $\Fc$ is the family of conjugates of the factors $F_j$. The structure of $\Aut_f(F)$ is clear. Indeed 
		\[\Aut_f(F) \cong \prod_{j \in J} \Aut(F_j) \times \prod_{s \in S} \Aut(\Zb).\]
		Finally, if $F$, $F_j$ and $S$ are as in Section~\ref{sec:structure_out} the group $\Aut_i(F) \subset \Aut(F)$ is the preimage of $\Wc_\Fb \subset \Out(F)$ so that $\Wc_\Fb \cong \Aut_i(F) / \Inn(F)$.
		
		We sketch the peak reduction method as applied by Gilbert \cite{Gilb} to find a finite presentation of $\Aut^0(F)$.
		
		Gilbert defines a set $\Wh \subset \Aut_i(F)$ of \emph{Whitehead automorphisms} of $F$ which generalize Whitehead's generating set of the automorphism group of a free group. By the work of Fouxe-Rabinovitch \cite{FR1} the set $\Omega = \Aut_f(F) \cup \Wh$ generates $\Aut^0(F)$. A presentation of $\Aut^0(F)$ on the set of generators $\Omega$ is found as follows. One defines a complexity $\left| \  \right| : \Aut^0(F) \to \Nb$ such that $| z |$ is minimal if and only if $z \in \Aut_f(F)$. The crucial peak reduction lemma asserts that if $z \in \Aut^0(F)$ and $w_1, w_2 \in \Omega$ such that $|zw_1| \leq |z| \geq |zw_2|$ with at least one inequality being strict, then there is a set of generators $w'_1, w'_2, \ldots, w'_n$ such that $|zw_1w'_1|, |zw_1w'_1w'_2|, \ldots ,|zw_1w'_1\ldots w'_{n-1}|$ are all strictly smaller than $|z|$ and such that $zw_2 = zw_1w'_1\ldots w'_{n-1}w'_n$. Recording all relations that arise as $w_2 (w_1 w'_1 \ldots w'_n)\inv$ in the peak reduction lemma and combining them with the relations that hold in $\Aut_f(F)$ allow to deduce a presentation of $\Aut^0(F)$. Indeed, given a word representing the trivial element in $\Aut^0(F)$, one can apply the peak reduction lemma until no three consecutive initial subwords form a peak, i.e.\ until all initial subwords are of minimal complexity. This in fact implies that all letters are elements of $\Aut_f(F)$, so that the resulting word is a consequence of the relations in $\Aut_f(F)$. Some more work is needed to extract a finite presentation of $\Aut^0(F)$ under the hypotheses that each $F_j$ and each $\Aut(F_j)$ is finitely presented.
		
		In order to state the next theorem, we define $\Wc_\Fb := \Aut_i(F) / \Inn(F)$. As commented above, this notation is consistent with the definition of $\Wc_\Fb$ in Section~\ref{sec:structure_out}. We adapt the peak reduction method to recover a presentation of $\Aut_i(F)$, yielding the following analogue of Fouxe-Rabinovitch's theorem:
		\begin{theorem}\label{theorem:F-R}
			$\Aut_i(F)$ and $\Wc_\Fb$ are finitely presented provided that for each $j \in J$ the group $F_j$ is finitely presented and $\Centr(F_j)$ is finitely generated.
		\end{theorem}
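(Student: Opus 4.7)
The plan is to adapt Gilbert's peak-reduction scheme directly to $\Aut_i(F)$, restricting the generating set so that finite presentability of $\Aut(F_j)$ is no longer needed. Concretely I would work with
\[ \Omega_i := (\Aut_f(F) \cap \Aut_i(F)) \cup \Wh. \]
Since each free factor $F_j$ is self-normalizing in the free product $F$, a factor automorphism that also lies in $\Aut_i(F)$ must restrict to an inner automorphism of each $F_j$, giving
\[ \Aut_f(F) \cap \Aut_i(F) \;\cong\; \prod_{j\in J} \Inn(F_j) \,\times\, \{\pm 1\}^S. \]
Each $\Inn(F_j) = F_j/\Centr(F_j)$ is a quotient of the finitely presented $F_j$ by the finitely generated central subgroup $\Centr(F_j)$, hence finitely presented; so $\Aut_f(F) \cap \Aut_i(F)$ is a finite direct product of finitely presented groups and is itself finitely presented.

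Next I would check that $\Omega_i$ generates $\Aut_i(F)$. Given $\varphi \in \Aut_i(F) \subseteq \Aut^0(F)$, Fouxe-Rabinovitch writes $\varphi$ as a word in $\Omega = \Aut_f(F) \cup \Wh$. Since $\Wh \subseteq \Aut_i(F)$ and $\Aut_i(F)$ is normal in $\Aut^0(F)$, one can commute each factor-automorphism letter past the surrounding Whitehead letters modulo a Whitehead correction; the collected factor-automorphism part of the resulting word must then project trivially to $\prod_j \Out(F_j)$, so it lies in $\Aut_f(F) \cap \Aut_i(F)$, yielding an expression of $\varphi$ as a word in $\Omega_i$.

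I would then rerun Gilbert's peak-reduction argument inside $\Aut_i(F)$ with generating set $\Omega_i$. Gilbert's complexity function restricts, and its minimum locus in $\Aut_i(F)$ is exactly $\Aut_f(F) \cap \Aut_i(F)$. The crucial observation is that the reducing letters $w_1',\ldots,w_n'$ produced by the peak-reduction lemma can be chosen as Whitehead automorphisms, hence remain in $\Omega_i$. A finite presentation of $\Aut_i(F)$ then assembles from the finite presentation of $\Aut_f(F) \cap \Aut_i(F)$ obtained above together with finitely many peak relators: only finitely many peaks of bounded complexity need be checked, up to the action of the minimum subgroup on the finite set $\Wh$. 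For the quotient, $\Wc_\Fb = \Aut_i(F)/\Inn(F)$ is then finitely presented because $\Inn(F) \cong F/\Centr(F)$ is finitely generated (as $F$ itself is).

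The main obstacle will be verifying that Gilbert's peak-reduction lemma really does produce the reducing letters inside $\Wh$ rather than in the full $\Omega$, so that the restricted peak reduction closes up within $\Omega_i$; this requires a careful reading of Gilbert's proof, though the analogous feature already holds in Whitehead's classical treatment of free groups, so one expects the argument to transfer without essential modification. A secondary technical point is organising the finitely many peak relators and the relations within $\Aut_f(F) \cap \Aut_i(F)$ into one coherent finite presentation, which is routine once the peak-reduction step is in place.
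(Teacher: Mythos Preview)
Your proposal is correct and follows essentially the same approach as the paper: the paper also restricts to the generating set $\Omega' = \Wh \cup (\Aut_f(F)\cap\Aut_i(F))$, computes $\Aut_f(F)\cap\Aut_i(F) \cong \prod_j \Inn(F_j) \times \prod_{s\in S}\Aut(\Zb)$, uses the conjugation fact $x\Wh x^{-1}\subset\Wh$ for $x\in\Aut_f(F)$ to show $\Omega'$ generates $\Aut_i(F)$, and then observes that Gilbert's peak-reduction relations already live in $\Omega'$. The only cosmetic difference is that the paper phrases the key point as ``inspection of the relations in \cite{Gilb} reveals that peak reduction only involves elements of $\Omega'$'' rather than claiming the reducing letters lie specifically in $\Wh$; either formulation suffices.
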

		\begin{proof}
			Since each $F_j$ is finitely generated by assumption, so are $F$ and $\Inn(F)$. Since $\Wc_\Fb = \Aut_i(F) / \Inn(F)$ it is sufficient to show that $\Aut_i(F)$ is finitely presented in order to prove the theorem. %Roughly speaking a presentation of $\Aut_i(F)$ is obtained by replacing $\Aut(F_j)$ by $\Inn(F_j)$ in Gilbert's presentation.
			
			Set $\Aut_{if}(F) := \Aut_i(F) \cap \Aut_f(F)$. Clearly \[\Aut_{if}(F) \cong \prod_{j \in J} \Inn(F_j) \times \prod_{s \in S} \Aut(\Zb).\] Therefore the hypotheses that $F_j$ is finitely presented and that $\Centr(F_j)$ is finitely generated imply that $\Inn(F_j)$ is finitely presented, so that $\Aut_{if}(F)$ is finitely presented.
			
			It is easy to show that for any $x \in \Aut_f(F)$ and any $w \in \Wh$ we have $x w x\inv \in \Wh$. Therefore any element $z \in \Aut_\Fc(F)$ can be written as a product \[z= x w_1 w_2 \ldots w_n\] where $x \in \Aut_f(F)$ and each $w_i \in \Wh$. If moreover $z \in \Aut_i(F)$ then $x$ in the expression above must be in $\Aut_{if}(F)$ as $\Wh \subset \Aut_i(F)$. This shows that $\Aut_i(F)$ is generated by $\Omega' := \Wh \cup \Aut_{if}(F)$.
			
			Inspection of the relations in \cite{Gilb} reveals that peak reduction only involves elements of $\Omega'$, so that a presentation for $\Aut_i(F) = \la \Omega' \mid \Rc \ra$ is obtained by choosing $\Rc$ to be the union of the relations that allow peak reduction with all relations that hold in $\Aut_{if}(F)$. Finally one extracts a finite presentation of $\Aut_i(F)$ using finite presentations of $\Inn(F_j)$ and $F_j$ in a similar fashion as is done for $\Aut^0(F)$.
		\end{proof}
		
		To prove Theorem~\ref{theorem:out_fp}, we need to introduce a relative version of it.
		\begin{theorem}[Relative version of Theorem~\ref{theorem:out_fp}]
			Let $(\Ab,G,\Hc)$ be a triple as in Section~\ref{sec:structure_out} satisfying condition \ref{C1}.
			Suppose the following holds:
				\begin{enumerate}
					\item For each vertex $v$ of $T_A$ and each group $H$ in $\Hc$ contained in $G_v$, the normalizer $\Norm_{G_v} H$ of $H$ in $G_v$ is finitely presented, and its center $\Centr(\Norm_{G_v} H)$ is finitely generated. \label{rel_norm_fp_center_fg}
					\item For each vertex $v$ of $T_A$, the group $\Out_{\Hc_v}(G_v)$ of automorphisms relative to the family $\Hc_v$ of elements of $\Hc$ contained in $G_v$ is finitely presented. \label{rel_vertex_gp_fp}
				\end{enumerate}
			Then $\Out_\Hc(G)$ is finitely presented. % ({\color{red}resp. presented}).
		\end{theorem}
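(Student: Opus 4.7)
The plan is to induct on $|EA|$. The base case $|EA|=0$ is immediate: $\Ab$ is a single vertex $v$, so $G=G_v$ and $\Hc_v=\Hc$, and hypothesis~(\ref{rel_vertex_gp_fp}) gives the conclusion. For $|EA|\ge 1$, I will form the graph of groups $\Bb$ of Subsection~\ref{subsec:def_Bb}. Condition~\ref{C1} is assumed, and condition~\ref{C2} will follow because the only edge group of $\Bb$ up to conjugacy is the finite group $G_0$, and Lemma~\ref{lemma:norm}(1) combined with hypothesis~(\ref{rel_norm_fp_center_fg}) makes $\Norm_{G_v}(G_0)$ finitely generated for each $v\in V$. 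Proposition~\ref{proposition:main_tech} then yields that $\Out^0(G)$ has finite index in $\Out_\Hc(G)$, so it suffices to show $\Out^0(G)$ is finitely presented, and the first virtually exact sequence reduces this to the finite presentability of $\prod_{v\in VB}\Out_{\Hc_v}(G_v)$ and of $\Wc_\Bb$.

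To handle the product, note that $\Out_{\Hc_{b_0}}(G_0)\subset\Out(G_0)$ is finite because $G_0$ is. For $v\in V$, the graph of groups $\Ab(v)$ is reduced and has strictly fewer edges than $\Ab$; a short check shows that all hypotheses of the theorem transfer to the triple $(\Ab(v), G_v, \Hc_v)$, the key point being that $G_w\subset G_v$ for each $w\in VT_{A(v)}$, so elements of $\Hc_v$ contained in $G_w$ coincide with elements of $\Hc$ contained in $G_w$. The inductive hypothesis then applies and $\Out_{\Hc_v}(G_v)$ is finitely presented.

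The second virtually exact sequence reduces the finite presentability of $\Wc_\Bb$ to that of $D=\ker\sigma^*$ and of $\Wc_\Fb$. For $\Wc_\Fb$ I apply Theorem~\ref{theorem:F-R}: each $F_j=N_j/G_0$ is finitely presented since $N_j$ is (by hypothesis~(\ref{rel_norm_fp_center_fg})) and $G_0$ is finite, while $\Centr(F_j)=K_j/G_0$ is finitely generated by the commutator argument of Corollary~\ref{corollary:ker_tau_G0_finite}, which gives $\Centr(N_j)$ finite index in $\Centr_{K_j}(G_0)$, itself of finite index in $K_j$ because $K_j$ acts on the finite group $G_0$. For $D$ I use the exact sequence of Lemma~\ref{lemma:ker_sigma}: the target $M=\prod_{i\in I}\Centr_G(G_0)\times\prod_{j\in J}\Centr(N_j)$ is finitely presented because each $\Centr_G(G_0)$ is a finite-index subgroup of the finitely presented group $\Norm_G(G_0)$ (using Lemma~\ref{lemma:norm}(1)) and each $\Centr(N_j)$ is finitely generated abelian. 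The image of $p$ is central, hence normal, in $M$, and is generated by the images of $\Centr(N)$ (finitely generated by Lemma~\ref{lemma:norm}(2) applied to $H=G_0$) and of each $\Centr(G_v)$ (finitely generated because $G_0\subset G_v$ forces $\Centr(G_v)\subset\Centr(N_k)$ for any $k$ with $v_k=v$, which is finitely generated abelian). Thus $D$ is the quotient of a finitely presented group by a finitely generated normal subgroup, and so is finitely presented. The main obstacle is coordinating this cascade of finiteness properties across the induction; hypothesis~(\ref{rel_norm_fp_center_fg}) is the essential input at every step.
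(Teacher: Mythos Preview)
Your proof is correct and follows essentially the same route as the paper: induct on $|EA|$, use Proposition~\ref{proposition:main_tech} to reduce to the finite presentability of $\prod_v\Out_{\Hc_v}(G_v)$ (handled by induction on the $\Ab(v)$), of $\Wc_\Fb$ (handled by Theorem~\ref{theorem:F-R} after checking $F_j$ is finitely presented and $\Centr(F_j)$ finitely generated via the commutator argument), and of $D=\ker\sigma^*$ (handled via the exact sequence of Lemma~\ref{lemma:ker_sigma}). The only minor differences are that the paper verifies condition~\ref{C2} directly from hypothesis~(\ref{rel_norm_fp_center_fg}) (since edge groups of $\Ab$ lie in $\Hc$, no appeal to Lemma~\ref{lemma:norm} is needed there), and the paper obtains finite generation of $\Centr(N)$ by observing it is finite or virtually cyclic rather than invoking Lemma~\ref{lemma:norm}(2); also, be aware that when you cite hypothesis~(\ref{rel_norm_fp_center_fg}) for $N_j$ being finitely presented and $\Centr(N_j)$ finitely generated, you are implicitly passing through Lemma~\ref{lemma:norm} applied to $\Ab(v_j)$, since $G_j$ is a vertex group of $\Bb$ rather than of $\Ab$.
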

		\begin{proof}
			We proceed by induction on the number of edges of $\Ab$. If $|EA|=0$ then $\Ab$ is a single vertex, and the statement is trivially true by condition \eqref{vertex_gp_fp}. Suppose $|EA|>0$. Let $\Bb$ be as in Section~\ref{sec:structure_out}. Recall that for every vertex $v \in VB$ we have that $\Ab(v)$ has fewer edges than $\Ab$. Moreover $(\Ab(v),G_v,\Hc_v)$ is a triple as in Section~\ref{sec:structure_out} satisfying condition \ref{C1} and conditions~\eqref{rel_norm_fp_center_fg} and~\eqref{rel_vertex_gp_fp} hold for $\Ac(v)$ since they hold for $\Ac$. Hence inductions applies and $\Out_{\Hc_v}(G_v)$ is finitely presented for each $v \in V$. 
			
			Lemma~\ref{lemma:norm} and condition \eqref{rel_norm_fp_center_fg} imply that $N$ is finitely presented and that $\Centr(N_j)$ is finitely generated for each $j \in J$. The centralizer $\Centr_G(G_0)$ is finitely presented since it is a finite index subgroup of $N=\Norm_G(G_0)$. Observe that subgroups of finitely generated abelian groups are still finitely generated abelian, and that finitely generated abelian groups are finitely presented. Therefore $\Centr(N_j)$ is finitely presented. Moreover $\Centr(G_v) \subset \Centr(N_k)$ whenever $N_k \subset G_v$, so that $\Centr(G_v)$ is finitely generated for each $v \in V$. If $\Nb$ is a mapping torus (e.g. $|S|=1$ and $J=\emptyset$) then $\Centr(N)$ is virtually infinite cyclic, and $\Centr(N)$ is finite otherwise. In particular, $\Centr(N)$ is finitely generated. Putting all of this together with the exact sequence in the remark following Lemma~\ref{lemma:ker_sigma}, we get that $\ker \sigma^*$ is finitely presented.
			
			Again by Lemma~\ref{lemma:norm} and condition~\eqref{rel_norm_fp_center_fg}, $N_j$ is finitely presented and $\Centr(N_j)$ is finitely generated for each $j \in J$. Since $G_0$ is finite, $F_j = N_j/G_0$ is also finitely presented. We show that $\Centr(F_j) = \Centr(N_j/G_0)$ is finitely generated. Let $K_j$ be the preimage in $N_j$ of $\Centr(N_j/G_0)$. Recall from the proof of Corollary~\ref{corollary:ker_tau_G0_finite} that  $\Centr(N_j)$ has finite index in $\Centr_{N_j}(G_0) \cap K_j$. But $\Centr_{N_j}(G_0)$ has finite index in $N_j$ since $G_0$ is finite, so that $\Centr(N_j)$ is a finite index subgroup of $K_j$. Since $\Centr(N_j)$ is finitely generated, so is $K_j$. Finally $\Centr(F_j) = K_j / G_0$ and therefore $\Centr(N_j/G_0) = \Centr(F_j)$ is finitely generated. In conclusion each $F_j$ is finitely presented and each $\Centr(F_j)$ is finitely generated, so the group $\Wc_\Fb$ is finitely presented by Theorem~\ref{theorem:F-R}.
			
			Note that condition \eqref{rel_norm_fp_center_fg} implies confition \ref{C2}, so that the sequences in Proposition~\ref{proposition:main_tech} are virtually exact. By the second short sequence, $\Wc_\Bb$ is finitely presented. Using the first short sequence we get that $\Out^0(G)$ is finitely presented as well. Since we assumed condition \ref{C1} holds $\Out_\Hc(G)$ contains $\Out^0(G)$ as a finite index subgroup, hence $\Out_\Hc(G)$ is finitely presented.
		\end{proof}
		
		\begin{remark} In light of Lemma~\ref{lemma:edge_stab_canonical} Theorem~\ref{theorem:out_fp} is implied by its relative version.
		\end{remark}
		
	\section{Groups with infinite $\Out(G)$}
	\label{sec:finite_out}	
		
		Similarly to Section~\ref{sec:finite_pres}, we shall prove Theorem~\ref{theorem:finite_out} by induction on the number of edges of $\Ab$ using the following relative version:
		\begin{theorem}[Relative version of Theorem~\ref{theorem:finite_out}]
			Let $(\Ab,G,\Hc)$ be a triple as in Section~\ref{sec:structure_out} satisfying condition \ref{C1}. Then $\Out_\Hc(G)$ is infinite if and only if one of the following holds:
			\begin{enumerate}
				\item There is a vertex stabilizer $G_v$ of $T_A$ such that $\Out_{\Hc_v}(G_v)$ is infinite; \label{cond1}
				\item There is a splitting of $G$ as an amalgam $A \ast_C B$ over a finite group with $B \neq C$ such that the center of $A$ has infinite index in the centralizer of $C$ in $A$; \label{cond2}
				\item There is a splitting of $G$ as an HNN extension $A \ast_C$ over a finite group such that the centralizer of $\tilde C$ in $A$ is infinite, where $\tilde C$ is one of the two isomorphic copies of $C$ in $A$ given by the HNN extension. \label{cond3}
			\end{enumerate}
		\end{theorem}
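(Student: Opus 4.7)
The proof is by induction on $|EA|$, with base case $|EA|=0$ immediate since then $G=G_v$ and $\Out_\Hc(G)=\Out_{\Hc_v}(G_v)$, so (1) is equivalent to the conclusion. For the easy direction ($\Leftarrow$), I would show each of (1)--(3) produces infinitely many outer automorphisms. For (1), if the vertex $v$ does not lie in $V$, I reduce to $v \in V$ using the inductive hypothesis on $(\Ab(v'),G_{v'},\Hc_{v'})$ for the $v'\in V$ with $v\in VT_{A(v')}$; once $v \in V$, Corollary~\ref{corollary:im_sigma_im_rho} (applicable since $G_0$ is finite) asserts that $\rho_\Bb$ has image of finite index in $\prod_v \Out_{\Hc_v}(G_v)$, so the infinite factor $\Out_{\Hc_v}(G_v)$ forces $\Out^0(G)\subseteq \Out_\Hc(G)$ to be infinite. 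For (2) and (3), I would produce Dehn twists $\psi_z$ (in the sense of Definition~\ref{defn:Dehn_twist}) for $z$ ranging in $\Centr_A(C)$ or $\Centr_A(\tilde C)$. Since elements of $\Hc$ are finite and hence elliptic in the splitting over the finite group $C$, their conjugacy classes are preserved by $\psi_z$, so $[\psi_z] \in \Out_\Hc(G)$. Applying Lemma~\ref{lemma:ker_sigma} to the one-edge splitting shows the kernel of $z \mapsto [\psi_z]$ is contained in $\Centr(A)\cdot C$ in the amalgam case (respectively in $\Centr(A)$ in the HNN case), so the hypothesized infinitude of $\Centr_A(C)/\Centr(A)$ (or of $\Centr_A(\tilde C)$) yields infinitely many distinct classes.

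For the hard direction ($\Rightarrow$), suppose $\Out_\Hc(G)$ is infinite. Condition (C1) and Proposition~\ref{proposition:main_tech} give that $\Out^0(G)$ is infinite, and the virtually exact sequence $\Wc_\Bb \hookrightarrow \Out^0(G) \stackrel{\rho_\Bb}{\longrightarrow} \prod_v \Out_{\Hc_v}(G_v)$ (requiring only that $G_0$ be finite) forces either some $\Out_{\Hc_v}(G_v)$ with $v \in V$ to be infinite, or $\Wc_\Bb$ itself to be infinite. In the first case, the inductive hypothesis applied to $(\Ab(v),G_v,\Hc_v)$, which has strictly fewer edges, yields one of (1)--(3) for $G_v$. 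Condition (1) for $G_v$ yields (1) for $G$ directly since $VT_{A(v)}\subseteq VT_A$. A splitting of $G_v$ as $A\ast_C B$ or $A\ast_C$ satisfying (2) or (3) refines $\Ab$ to a graph of groups $\Ab^*$ with an additional edge carrying $C$; collapsing all other edges of $\Ab^*$ produces a one-edge splitting of $G$ with the same side $A$ and the same edge group $C$, so the centralizer conditions in (2) and (3) transfer verbatim as they are intrinsic to the pair $(A,C)$ (not to the ambient group).

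If instead $\Wc_\Bb$ is infinite, I analyze the second virtually exact sequence of Proposition~\ref{proposition:main_tech} together with Lemmas~\ref{lemma:ker_sigma} and~\ref{lemma:ker_tau} (supplemented by direct analysis when $N_j$ fails to be finitely generated, since Corollary~\ref{corollary:ker_tau_G0_finite} is then unavailable). Either $D=\ker \sigma^*$ or $\Wc_\Fb$ is infinite. By Lemma~\ref{lemma:ker_sigma}, $D$ is a quotient of $\prod_{i\in I}\Centr_G(G_0) \times \prod_{j\in J}\Centr(N_j)$, so infinitude of $D$ forces one of these centralizers to be infinite; I then select an appropriate edge of $\bar\Bb$ to collapse the rest against. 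A loop edge yields an HNN extension of $G$ over $G_0$ and condition (3), directly from infinite $\Centr_G(G_0)$; a non-loop edge yields an amalgam $A\ast_{G_0}B$ with $B\supsetneq G_0$ (since $\bar\Bb$ is reduced) and condition (2), using Lemma~\ref{lemma:normvertexgp} to identify $\Centr_G(A)=\Centr(A)$ so as to compute the index $[\Centr_A(G_0):\Centr(A)]$. A parallel analysis for $\Wc_\Fb$, via the partial-conjugation generators of $\Aut_i(F)$ from the proof of Theorem~\ref{theorem:F-R}, lifts through $\tau^*$ and $\sigma^*$ to produce splittings of $G$ over $G_0$ witnessing (2) or (3).

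The main obstacle is precisely this last matching step: translating an abstract infinite subgroup of $\Wc_\Bb$ (or $\Wc_\Fb$) into a concrete one-edge splitting of $G$ whose centralizer has infinite index over the center, rather than merely being infinite. Particular care is required to distinguish loop from non-loop edges of $\bar\Bb$, and, in the amalgam case, to verify that the infinitude deduced from the exact sequences really sits in $\Centr_A(G_0)/\Centr(A)$ and not entirely in $\Centr(A)$; this forces a careful traversal of how the decomposition $\bar\Nb$ of the normalizer $N$ sits inside the decomposition $\bar\Bb$ of $G$.
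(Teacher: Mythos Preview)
Your overall strategy matches the paper's: induction on $|EA|$, the easy direction via Dehn twists and Lemma~\ref{lemma:extension}, and the hard direction by chasing the diagram of Figure~\ref{fig:CVS_groups}. Two points deserve correction.

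First, your dichotomy ``either $D=\ker\sigma^*$ or $\Wc_\Fb$ is infinite'' presupposes the second virtually exact sequence of Proposition~\ref{proposition:main_tech}, but that sequence requires condition~\ref{C2}, which is \emph{not} assumed here. Without \ref{C2}, Corollary~\ref{corollary:ker_tau_G0_finite} is unavailable and $\ker\tau^*$ may well be infinite. The paper treats this as a genuinely separate case: since $\Centr(G_0)$ is finite, Lemma~\ref{lemma:ker_tau} forces some $\Centr(N_j)$ to have infinite index in $\Centr_{K_j}(G_0)$, and then the chain $\Centr(G_j)\subset\Centr(N_j)$ and $\Centr_{K_j}(G_0)\subset\Centr_{G_j}(G_0)$ gives an amalgam satisfying~(2) by collapsing all edges of $\Bb$ except $e_j$. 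You gesture at this (``supplemented by direct analysis''), but it is not a supplement; it is a full third case on equal footing with the other two.

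Second, your plan for $\Wc_\Fb$ infinite --- lifting partial-conjugation generators of $\Aut_i(F)$ through $\tau^*$ and $\sigma^*$ --- is both vague and unnecessarily indirect. The paper instead does a short structural case analysis on the graph $\Fb$: if the valence of $b_0$ is at least~$3$, collapsing all but one edge of $\Bb$ immediately produces an amalgam or HNN extension satisfying (2) or (3); if $|J|=2$ and $S=\emptyset$, then $\Wc_\Fb\cong\Inn(F_j)\times\Inn(F_{j'})$ is computed explicitly, and one factor being infinite yields $[\Norm_{G_j}(G_0):\Centr(G_j)]=\infty$; if $|J|+|S|\leq 1$ then $\Wc_\Fb$ is finite. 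This avoids any lifting through $\tau^*$, which is the right move since the image of $\tau^*$ is only of finite index and you would otherwise need to argue that your chosen generators land in it.

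Your handling of $\ker\sigma^*$ and the inductive lift of conditions (1)--(3) from $G_v$ to $G$ is correct and matches the paper.
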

		\begin{proof}
			One implication is straightforward. Suppose that $G$ satisfies condition \eqref{cond1}. %Note that a maximal one-ended subgroup $H$ of $G$ is necessarily a vertex stabilizer $G_v$ of the action of $G$ on the Bass-Serre tree $T_A$, and $\Hc_H=\Hc_v$ is the family of edge stabilizers contained in $G_v$.
			Let $\Hc_v^*$ be the set of edge stabilizers of $G$ contained in $G_v$. Let $\Out^*(G_v)$ be the finite index subgroup of $\Out_{\Hc_v}(G_v)$ acting on each element of $\Hc_v^*$ as a conjugation in $G_v$. Using Lemma~\ref{lemma:extension} one can extend any outer automorphism $\varphi$ of $\Out^*(G_v)$ to an outer automorphism $e(\varphi)$ of $G$. As $\Out(G_v)$ is infinite, then so is $G_v$. Hence $G_v$ is its own normalizer in $G$. Recall the definition of $\rho_v$ from Section~\ref{subsec:rho}. By definition, $e$ is such that $\rho_v(e(\varphi)) = \varphi$, so $\Out(G)$ must be infinite. If $G$ satisfies condition~\eqref{cond2} or~\eqref{cond3}, one easily constructs infinitely many Dehn twists using the given splitting. 
		
		Let us show by induction on the the number of edges of $\Ab$ that any accessible group satisfying the hypotheses of this theorem such that $\Out_\Hc(G)$ is infinite must satisfy one of the conditions \eqref{cond1}, \eqref{cond2} or  \eqref{cond3}. If $|EA|=0$, this is obvious as $\Ab$ consists of a single vertex. Suppose that $|EA|>0$. Thus $G$ is not one-ended and Section~\ref{sec:structure_out} applies to $\Ab$. We use the notation from Section~\ref{sec:structure_out} until the end of the proof. As we assumed condition \ref{C1} $\Out^0(G)$ has finite index in $\Out_\Hc(G)$ and so $\Out^0(G)$ is infinite. Combining Corollaries \ref{corollary:im_sigma_im_rho} and \ref{corollary:tau_virt_epi} there are four possibilities for $\Out^0(G)$ to be infinite (see Figure \ref{fig:CVS_groups}): either one of the vertex groups of $\Bb$ has infinite relative outer automorphism group; or $\ker \sigma^*$ is infinite; or $\ker \tau^*$ is infinite; or $\Wc_\Fb$ is infinite.
		
		For any $v \in V$ the graph of groups $\Ab(v)$ has strictly less edges than $\Ab$, so we know by induction that for any vertex $v \in VB$ with $\Out_{\Hc_v}(G_v)$ infinite, $G_v$ satisfies one of~\eqref{cond1}, \eqref{cond2} or \eqref{cond3}. As splittings of $G_v$ over finite groups can be extended to splittings of $G$, and since maximal elliptic subgroups of $G_v$ are maximal elliptic subgroups of $G$ acting on $T_A$, conditions \eqref{cond1}, \eqref{cond2} and \eqref{cond3} can be lifted to $G$, so we can assume that all vertex groups of $\Bb$ have finite relative automorphism group.
		
		Suppose the kernel of $\sigma^*$ is infinite. By Lemma~\ref{lemma:ker_sigma} only two cases might arise. Either there are $k \neq k' \in K$ such that $v_k=v_{k'} $ and such that $\Centr_G(N_k)$ is infinite. Thus collapsing all edges except $e_k$ in $\Bb$ yields an HNN-extension of $G$ over $G_0$ with $\Centr_G(G_0)$ infinite and such that no nontrivial power of the stable letter normalizes $G_0$, so that $G$ satisfies \eqref{cond3}. Or there is some $k$ such that $\Centr(G_v)$ has infinite index in $\Centr_G(N_k)$, yielding either an amalgam satisfying \eqref{cond2} or an HNN extension satisfying \eqref{cond3}.
		
		Suppose that the kernel of $\tau^*$ is infinite. Since $\Centr(G_0)$ is finite, Lemma~\ref{lemma:ker_tau} implies that there is some $j\in J$ such that $\Centr(N_j)$ has infinite index in $\Centr_{K_j}(G_0)$. Note that $\Centr(G_j) \subset \Centr(N_j)$ and that $\Centr_{K_j}(G_0) \subset \Centr_{G_j}(G_0)$, so there is some $j\in J$ such that $\Centr(G_j)$ has infinite index in $\Centr_{G_j}(G_0)$. Therefore, collapsing all edges of $\Bb$ except $e_j$ yields a splitting of $G$ satisfying \eqref{cond2}.
		
		Suppose now that $\Wc_\Fb$ is infinite. If the valence of $b_0$ is at least 3 in $\Fb$, either there is one non-loop edge and collapsing all other edges in $\Bb$ produces an amalgam satisfying \eqref{cond2}, or there are only loop edges and collapsing all but one in $\Bb$ yields an HNN extension satisfying \eqref{cond3}. If $|J|=2$ and $S$ is empty, then $F$ is a free product $F_j * F_{j'}$ and $\Wc_\Fb \cong \Inn(F_j) \times \Inn(F_{j'})$. Without loss of generality, the center of $F_j$ has infinite index in $F_j$, hence so does the center of $N_j$ in $N_j$ and $\Centr(G_j)$ in $\Norm_{G_j}(G_0)$. Hence, we have the desired amalgam. If either $S$ or $J$ consist of a single element and the other is empty, $\Wc_\Fb$ is finite, so we have examined all cases.
	\end{proof}
	
	\begin{remark} 
			Theorem~\ref{theorem:finite_out} is implied by its relative version by Lemma~\ref{lemma:edge_stab_canonical}.
	\end{remark}
	
	\section{Hyperbolic groups}
	\label{sec:hyperbolic_gps}
	
		Let us recall some facts about word hyperbolic groups:
		\begin{lemma}\label{lemma:std_hyp_facts}
			Let $\Gamma$ be a hyperbolic group. The following holds:
			\begin{enumerate}
				\item $\Gamma$ is finitely presented. \label{hyp_gp_fp}
				\item If $G_0$ is a finite subgroup of $\Gamma$, then the normalizer of $G_0$ in $\Gamma$ is a quasiconvex subgroup of $\Gamma$. In particular, it is hyperbolic and thus finitely presented.\label{hyp_gp_norm_finite_qc}
				\item There are finitely many conjugacy classes of finite subgroups of $\Gamma$. \label{hyp_gp_finite_subgps}
				\item The center of $\Gamma$ is either finite or virtually infinite cyclic. \label{hyp_gp_center}
			\end{enumerate}
		\end{lemma}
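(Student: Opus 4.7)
The plan is to deduce each of the four items from standard features of hyperbolic groups via the Rips complex and classical structure theorems. Recall that for $d$ sufficiently large (depending on the hyperbolicity constant), the Rips complex $P_d(\Gamma)$ is a finite-dimensional contractible simplicial complex on which $\Gamma$ acts cocompactly by simplicial automorphisms, with each simplex stabilizer a finite subgroup of $\Gamma$, and conversely every finite subgroup of $\Gamma$ stabilizes a simplex of $P_d(\Gamma)$.

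For (1) and (3), both claims follow readily from this action. A cocompact action on a contractible complex implies $\Gamma$ is of type $F_\infty$, and in particular finitely presented, giving (1). For (3), there are only finitely many $\Gamma$-orbits of simplices; every finite subgroup is contained in one of the finitely many (representatives of) simplex stabilizers, and each such stabilizer has only finitely many subgroups, giving finitely many conjugacy classes.

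For (2), the approach is to first show that $\Centr_\Gamma(G_0)$ is quasiconvex. The key geometric fact is that an element commuting with every element of the finite subgroup $G_0$ cannot stray too far from the set of ``nearly fixed'' vertices of $G_0$ in the Cayley graph, which is $\Centr_\Gamma(G_0)$-invariant, quasiconvex, and admits a cocompact $\Centr_\Gamma(G_0)$-action, yielding the quasiconvexity of $\Centr_\Gamma(G_0)$. The conjugation action of $\Norm_\Gamma(G_0)$ on $G_0$ produces an embedding of $\Norm_\Gamma(G_0)/\Centr_\Gamma(G_0)$ into the finite group $\Aut(G_0)$, so $\Centr_\Gamma(G_0)$ has finite index in $\Norm_\Gamma(G_0)$, which is therefore quasiconvex as well. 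Quasiconvex subgroups of hyperbolic groups are themselves hyperbolic and finitely generated, hence finitely presented by (1) applied to the subgroup.

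For (4), I would distinguish the cases where $\Gamma$ is finite, two-ended, or non-elementary. The first two are immediate: a subgroup of a finite (resp. virtually infinite cyclic) group is finite (resp. finite or virtually infinite cyclic). In the non-elementary case, $\Gamma$ contains an element $g$ of infinite order, whose centralizer $\Centr_\Gamma(g)$ is virtually infinite cyclic. The center $\Centr(\Gamma)$ is contained in $\Centr_\Gamma(g)$, and cannot contain any element $z$ of infinite order, as otherwise $\Gamma = \Centr_\Gamma(z)$ would be virtually infinite cyclic, contradicting non-elementarity. Hence $\Centr(\Gamma)$ is a torsion subgroup of a virtually infinite cyclic group, and is therefore finite. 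The main technical step is the quasiconvexity assertion of (2); I would cite Ghys--de la Harpe or Bridson--Haefliger for this and for the standard fact that centralizers of infinite order elements in hyperbolic groups are virtually cyclic, rather than reproving them.
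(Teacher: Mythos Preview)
Your proposal is correct and close in spirit to the paper's proof, but differs in a few mechanical choices worth noting. For (1) and (3) you invoke the Rips complex explicitly, whereas the paper simply cites \cite{CoDePa} and \cite{BogGer}; the underlying mathematics is the same. For (2), the paper takes a slightly more modular route: it quotes Short's result that the centralizer of a single element is quasiconvex, then obtains $\Centr_\Gamma(G_0)$ as a \emph{finite intersection} of such centralizers (one for each element of the finite group $G_0$), using that finite intersections of quasiconvex subgroups remain quasiconvex. Your direct ``nearly fixed points'' argument is also standard and perfectly valid, but the intersection argument is shorter and avoids having to justify cocompactness of the centralizer action on the quasi-fixed set. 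Both proofs then pass from $\Centr_\Gamma(G_0)$ to $\Norm_\Gamma(G_0)$ via the finite-index inclusion. For (4), the paper's dichotomy is organized slightly differently: rather than splitting into elementary versus non-elementary, it asks directly whether $\Centr(\Gamma)$ contains an element of infinite order; if so, the center sits inside the (virtually cyclic) centralizer of that element, and if not, the center is abelian torsion and hence finite by the bound on orders of finite subgroups coming from (3). Your version reaches the same conclusion and in fact shows the sharper statement that a non-elementary hyperbolic group has \emph{finite} center.
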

		\begin{proof} \eqref{hyp_gp_fp} The first assertion is proved in {\cite[5.2.3]{CoDePa}}.
		
			\eqref{hyp_gp_norm_finite_qc} By {\cite[3.3]{Short}} the centralizer of an element in $\Gamma$ is quasiconvex. Hence the centralizer of any finite subgroup $G_0$ is quasiconvex, as it is the intersection of the centralizer of each element of $G_0$, and so $\Centr_\Gamma(G_0)$ is the intersection of finitely many quasiconvex subgroups of $\Gamma$. Since $\Centr_\Gamma(G_0)$ is a finite index subgroup of $\Norm_\Gamma(G_0)$ the latter is quasiconvex as well. (See {\cite[Chap. 10]{CoDePa}} for facts about quasiconvex subgroups of hyperbolic groups).
			
			\eqref{hyp_gp_finite_subgps} The third assertion follows from the main theorem of \cite{BogGer}.
			
			\eqref{hyp_gp_center} If there is some $g \in \Centr(\Gamma)$ of infinite order, then $\Centr(\Gamma) \subset \Centr_\Gamma(\langle g \rangle)$ is virtually infinite cyclic by {\cite[10.7.2]{CoDePa}}. If not, then $\Centr(\Gamma)$ is an abelian torsion group. Since there is a bound on the order of a finite subgroup of $\Gamma$ by assertion \eqref{hyp_gp_finite_subgps}, any abelian torsion subgroup of $\Gamma$ is finite.
		\end{proof}
		
		\begin{lemma} \label{lemma:hyperbolic_splitting}
			If $G = A \ast_C$ (resp. $G=A \ast_C B$) where $C$ is a finite group, then $G$ is hyperbolic if and only if $A$ is hyperbolic (resp. $A$ and $B$ are hyperbolic).
		\end{lemma}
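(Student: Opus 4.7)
The plan is to prove both directions by separating them and in each case reducing to a standard fact about quasi-isometric actions on hyperbolic spaces.

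For the direction asserting that $G$ is hyperbolic when the vertex groups are, I would invoke a combination theorem for graphs of groups with finite edge groups, which is the simplest case of Bestvina--Feighn's combination theorem. More concretely, one can give a direct construction: fix proper cocompact actions of $A$ (and $B$ in the amalgam case) on hyperbolic geodesic spaces $X_A$ (and $X_B$), and build a graph of spaces $Y$ by taking one copy of $X_A$ for each vertex in the $G$-orbit of a vertex fixed by $A$ in the Bass-Serre tree $T$ (similarly for $B$), glued along edge spaces which are single points, since $C$ is finite and can be represented by a point. The resulting space $Y$ is quasi-isometric to the tree-like gluing of copies of hyperbolic spaces over trivial edge spaces; this is hyperbolic (one checks thin triangles using the tree structure of $T$ and the hyperbolicity of each $X_A$, $X_B$). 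Since $G$ acts properly cocompactly on $Y$, $G$ is hyperbolic by the Svarc--Milnor lemma.

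For the converse, assume $G$ is hyperbolic and consider the Bass-Serre tree $T$ of the splitting. Since $C$ is finite, all edge stabilizers of the $G$-action on $T$ are finite. A standard fact about hyperbolic groups (for instance, as in Bowditch's work, or a direct argument using quasigeodesic representatives) asserts that vertex stabilizers of an action of a hyperbolic group on a simplicial tree with finite edge stabilizers are quasiconvex subgroups of $G$. Quasiconvex subgroups of hyperbolic groups are themselves hyperbolic, so $A$ (and $B$) is hyperbolic.

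If the quasiconvexity of vertex groups is deemed too heavy a tool, one can argue more directly: by collapsing, we can assume the splitting has a single edge. In the amalgam case $G = A \ast_C B$ with $C$ finite, one can realize $A$ as the stabilizer of a vertex in the Bass-Serre tree, and pick a Cayley graph of $G$ together with a family of horoball-like neighborhoods of the $A$-orbit; the fact that $C$ is finite makes the combinatorics very mild and allows one to conclude that $A$ sits quasi-isometrically in $G$. The HNN case is analogous.

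The main obstacle I expect is writing out the combination direction carefully enough to be convincing without quoting the full machinery of Bestvina--Feighn; everything else is either a direct application of Svarc--Milnor or a standard quasiconvexity statement. I would likely cite a combination theorem for finite edge groups (which is folklore and appears in several references) rather than redo the hyperbolicity verification for the space $Y$ by hand.
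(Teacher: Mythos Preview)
Your proposal is correct and follows essentially the same approach as the paper: for the forward direction you invoke the Bestvina--Feighn combination theorem (or a direct tree-of-spaces/Cayley graph argument), and for the converse you use that the vertex groups are quasiconvex in $G$ and hence hyperbolic. The paper's proof is terser but makes exactly these two moves.
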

		\begin{proof}
			Suppose we have an amalgam $G=A \ast_C B$. If $A$ and $B$ are hyperbolic and $C$ is finite it can be seen directly from the Cayley graph that $G$ is hyperbolic. It also follows from the (much stronger) combination theorem in \cite{BestFeighn}. Suppose now that $G$ is hyperbolic. It is not hard to show that both $A$ and $B$ are quasiconvex subgroups of $G$, so that they are hyperbolic. The case of an HNN extension is identical.
		\end{proof}
		\begin{theorem}
			Let $\Gamma$ be a hyperbolic group. Then $\Aut(\Gamma)$ is finitely presented.
		\end{theorem}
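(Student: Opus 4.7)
The plan is to deduce the theorem from Theorem~\ref{theorem:out_fp}, applied to an accessible decomposition of $\Gamma$, and then to upgrade a finite presentation of $\Out(\Gamma)$ to one of $\Aut(\Gamma)$ via the standard short exact sequence $1 \to \Inn(\Gamma) \to \Aut(\Gamma) \to \Out(\Gamma) \to 1$. Since $\Gamma$ is finitely presented by Lemma~\ref{lemma:std_hyp_facts}\eqref{hyp_gp_fp}, Dunwoody's accessibility theorem \cite{Dunwoody} supplies a finite reduced graph of groups decomposition $\Ab$ of $\Gamma$ with finite edge groups and finitely generated vertex groups with at most one end. Iterated application of Lemma~\ref{lemma:hyperbolic_splitting} along the Bass--Serre tree shows that every vertex group $G_v$ is itself hyperbolic.

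I would then verify the two hypotheses of Theorem~\ref{theorem:out_fp}. For condition~\eqref{norm_fp_center_fg}, each edge stabilizer $G_e$ is finite, so Lemma~\ref{lemma:std_hyp_facts}\eqref{hyp_gp_norm_finite_qc} ensures that $N_e = \Norm_{G_v}(G_e)$ is quasiconvex in $G_v$, hence hyperbolic, hence finitely presented; Lemma~\ref{lemma:std_hyp_facts}\eqref{hyp_gp_center} then forces $\Centr(N_e)$ to be finitely generated. For condition~\eqref{vertex_gp_fp}, the vertex group $G_v$ is either finite, in which case $\Out(G_v)$ is finite, or one-ended hyperbolic, in which case I would appeal to the existing result of Levitt~\cite{Levitt} (via JSJ/Rips theory) that $\Out(G_v)$ is finitely presented. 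In either case $\Out_{\Hc_v}(G_v)$ has finite index in $\Out(G_v)$: the family $\Hc_v$ consists of finite subgroups, and by Lemma~\ref{lemma:std_hyp_facts}\eqref{hyp_gp_finite_subgps} there are only finitely many conjugacy classes of finite subgroups in $G_v$, on which $\Out(G_v)$ acts. Hence $\Out_{\Hc_v}(G_v)$ is finitely presented, and Theorem~\ref{theorem:out_fp} yields that $\Out(\Gamma)$ is finitely presented.

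To conclude, $\Inn(\Gamma) \cong \Gamma/\Centr(\Gamma)$ is the quotient of the finitely presented group $\Gamma$ by its center, which is finitely generated by Lemma~\ref{lemma:std_hyp_facts}\eqref{hyp_gp_center}; it is therefore itself finitely presented. Combining this with the finite presentation of $\Out(\Gamma)$ through the short exact sequence above produces a finite presentation of $\Aut(\Gamma)$. The main obstacle is the imported one-ended hyperbolic case of condition~\eqref{vertex_gp_fp}: finite presentability of $\Out$ for a one-ended hyperbolic group cannot be produced by Theorem~\ref{theorem:out_fp} itself (the relevant canonical JSJ splitting is over two-ended, not finite, subgroups) and must be cited from the literature on automorphism groups of hyperbolic groups; every other step is a direct combination of Lemma~\ref{lemma:std_hyp_facts} with the machinery of Theorem~\ref{theorem:out_fp}.
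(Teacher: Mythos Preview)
Your proposal is correct and follows essentially the same route as the paper: Dunwoody accessibility, hyperbolicity of vertex groups via Lemma~\ref{lemma:hyperbolic_splitting}, verification of both hypotheses of Theorem~\ref{theorem:out_fp} via Lemma~\ref{lemma:std_hyp_facts}, and then the passage from $\Out(\Gamma)$ to $\Aut(\Gamma)$ through the extension by $\Inn(\Gamma)\cong\Gamma/\Centr(\Gamma)$. The only place where the paper is more explicit is the one-ended vertex case: rather than citing finite presentability of $\Out(G_v)$ directly from Levitt, it unpacks Levitt's structural result (an extension of a product of algebraic mapping class groups by a virtually abelian group) and then invokes Fujiwara together with Hatcher--Thurston and McCool to get finite presentability of mapping class groups of $2$-orbifolds.
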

		\begin{proof}
			Since $\Gamma$ is finitely presented, so by Dunwoody's accessibility \cite{Dunwoody} it admits a decomposition $\Ab$ as a finite reduced graph of groups with vertex groups with at most one end and finite edge groups. Let $\Hc$ be the family of edge stabilizers of $T_A$.
			
			Let us show that $\Ab$ satisfies condition \ref{vertex_gp_fp} of Theorem~\ref{theorem:out_fp}. Let $\Gamma_v$ be a vertex stabilizer of $T_A$. Applying Lemma~\ref{lemma:hyperbolic_splitting} repeatedly yields that $\Gamma_v$ is a hyperbolic group. As $\Gamma_v$ has at most one end, Levitt showed {\cite[Theorem 5.1]{Levitt}} that $\Out(\Gamma_v)$ is virtually an extension of $\prod_{x \in X} \Out_\Sc(G_x)$ by a group which is virtually $\Zb^n$, where $\Out_\Sc(G_x)$ is the algebraic mapping class group of a hyperbolic 2-orbifold $\Sigma_x$, i.e.\ the group of outer automorphisms of $G_x=\pi_1(\Sigma_x)$ preserving the family $\Sc$ of subgroups corresponding to boundary components. Following the proof in the orientable case \cite{MacHar}, Fujiwara \cite{Fujiwara} proved in full generality that when $\Sigma_x$ is a hyperbolic 2-orbifold, the algebraic mapping class group $\Out_\Sc(G_x)$ is isomorphic to the geometric mapping class group $\Mod^\partial(\Sigma_x)$ preserving the boundary componentwise. Moreover, he explains how to embed $\Mod^\partial(\Sigma_x)$ as a finite index subgroup of the mapping class group of a surface $\Mod^\partial(\Sigma'_x)$.
			Mapping class groups of surfaces are finitely presented by \cite{HatThur,McCool}. Therefore $\Out(\Gamma_v)$ is finitely presented. Let $\Hc_v$ be the set of edge stabilizers of $T_A$ contained in $\Gamma_v$. Since any hyperbolic group has only finitely many conjugacy class of finite subgroups, $\Out_{\Hc_v}(\Gamma_v)$ has finite index in $\Out(\Gamma_v)$ and so $\Out_{\Hc_v}(\Gamma_v)$ is finitely presented.
			
			$\Ab$ also satisfies condition \ref{norm_fp_center_fg} of Theorem~\ref{theorem:out_fp}. Indeed, any vertex stabilizer $\Gamma_v$ is hyperbolic, and so the normalizer $N$ of any finite group in $\Gamma_v$ is hyperbolic and finitely presented. Moreover, as $N$ is hyperbolic, its center is either finite or virtually infinite cyclic. In particular, it is finitely generated.
			
			Thus $\Ab$ satisfies the hypotheses of Theorem~\ref{theorem:out_fp}, so $\Out(\Gamma)$ is finitely presented. Moreover we have that
			\[ \Out(\Gamma) \cong \frac{\Aut(\Gamma)}{\Gamma/\Centr(\Gamma)}\] hence $\Aut(\Gamma)$ is finitely presented as well.
		\end{proof}
		
		Combining Theorem~\ref{theorem:finite_out} with the characterization of one-ended hyperbolic groups with infinite outer automorphism group in {\cite[Theorem 1.4]{Levitt}}, we get the following
		\begin{theorem}\label{theorem:hyp_gp_infinite_out}
			Let $\Gamma$ be a hyperbolic group. Then $\Out(\Gamma)$ is infinite if and only if one of the following holds:
			\begin{enumerate}
				\item $\Gamma$ splits as an amalgam of groups with finite center over a virtually cyclic subgroup with infinite center;
				\item $\Gamma$ splits as an arbitrary HNN extension over a virtually cyclic group with infinite center;
				\item $\Gamma$ splits as an amalgam $A \ast_C B$ over a finite group with $B \neq C$ such that the center of $A$ has infinite index in the centralizer of $C$ in $A$;
				\item $\Gamma$ splits as an HNN extension $A \ast_C$ over a finite group such that the centralizer of $\tilde C$ in $A$ is infinite, where $\tilde C$ is one of the two isomorphic copies of $C$ in $A$ given by the HNN extension.
				\qedhere
			\end{enumerate}
		\end{theorem}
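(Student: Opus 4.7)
My plan is to combine Theorem~\ref{theorem:finite_out}, applied to a Dunwoody decomposition of $\Gamma$, with Levitt's characterization of one-ended hyperbolic groups with infinite outer automorphism group quoted in the introduction.

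For the converse direction, I would verify that each of the four conditions produces infinitely many pairwise non-inner automorphisms of $\Gamma$. Conditions (3) and (4) coincide with conditions (2) and (3) of Theorem~\ref{theorem:finite_out}, so the proof of that theorem already exhibits the required Dehn twists. Conditions (1) and (2) provide an infinite-order element $z$ in the center of the virtually cyclic edge group $C$, which therefore lies in $\Centr_A(C)$; by Definition~\ref{defn:Dehn_twist}, the powers $z^n$ define Dehn twists $\psi_n$, and the finite-center hypothesis on the amalgam factors (respectively the HNN structure) ensures the outer classes $[\psi_n]$ are pairwise distinct.

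For the forward direction, I would apply Dunwoody accessibility to write $\Gamma = \pi_1(\Ab)$ with $\Ab$ a finite reduced graph of groups over finite edge groups and one-ended or finite vertex groups, and then invoke Theorem~\ref{theorem:finite_out}. Condition~\ref{C1} holds by Lemma~\ref{lemma:std_hyp_facts}\eqref{hyp_gp_finite_subgps}. Conclusions (2) and (3) of Theorem~\ref{theorem:finite_out} are exactly conditions (3) and (4) of the present statement. In the remaining case, some vertex group $G_v$ satisfies $\Out_{\Hc_v}(G_v)$ infinite, hence also $\Out(G_v)$ infinite, since $\Hc_v$ consists of finitely many $G_v$-conjugacy classes of finite subgroups and $\Out(G_v)$ acts on this finite set with $\Out_{\Hc_v}(G_v)$ of finite index in the corresponding stabilizer. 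As $G_v$ is one-ended hyperbolic (the finite case is excluded since a finite group has finite $\Out$), Levitt's theorem provides a splitting of $G_v$ over a 2-ended subgroup $C$ with infinite center, either as an arbitrary HNN extension or as an amalgam of groups with finite centers.

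The main remaining step, and the delicate one, is to lift this splitting of the vertex group $G_v$ to a splitting of $\Gamma$ satisfying condition (1) or (2). The lifting proceeds by Bass--Serre refinement: the edge groups of $\Ab$ incident to $v$ are finite, hence act elliptically on the Bass--Serre tree of the $C$-splitting of $G_v$ and are conjugate into one of its factors. Replacing $v$ accordingly in $\Ab$ produces a new graph of groups $\Ab'$ for $\Gamma$, and collapsing every edge of $\Ab'$ except the new $C$-edge exhibits $\Gamma$ as $\Gamma' \ast_C \Gamma''$ or $\Gamma' \ast_C$. The factors $\Gamma'$ and $\Gamma''$ are hyperbolic by Lemma~\ref{lemma:hyperbolic_splitting} and each inherits a decomposition over finite edge groups in which Levitt's corresponding factor appears as a vertex group. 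By Lemma~\ref{lemma:norm}, the center of such a factor is finite unless that auxiliary decomposition is a mapping torus, in which case the factor is virtually cyclic by Lemma~\ref{lemma:std_hyp_facts}\eqref{hyp_gp_center}; in that exceptional situation a direct rearrangement of the splitting realizes $\Gamma$ as an HNN extension over $C$, placing us in condition (2). In all other cases we land directly in condition (1) or (2), completing the proof.
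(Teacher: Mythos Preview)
Your proposal is correct and follows exactly the route the paper indicates: the paper states Theorem~\ref{theorem:hyp_gp_infinite_out} without a written proof, merely as the combination of Theorem~\ref{theorem:finite_out} with Levitt's one-ended criterion, and you have supplied precisely the details this combination requires, including the Bass--Serre refinement step needed to lift a splitting of a vertex group $G_v$ to a splitting of $\Gamma$.

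One simplification: your mapping-torus case analysis at the end is unnecessary. Once you have $\Gamma=\Gamma'\ast_C\Gamma''$ with $A'\subset\Gamma'$ a vertex stabilizer in a tree with finite edge stabilizers, the fact that $A'$ is infinite means it fixes no edge, so Lemma~\ref{lemma:normvertexgp} gives $\Norm_{\Gamma'}(A')=A'$ directly; hence $\Centr(\Gamma')\subset\Centr(A')$, which is finite by Levitt's hypothesis. The exceptional mapping-torus situation therefore cannot arise, and you land in condition~(1) or~(2) without any rearrangement.
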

		
		\section*{Acknowledgements}
			
			The author would like to thank Richard Weidmann for suggesting the problem and for his support during the preparation of this paper. He also profited from several discussions with Vincent Guirardel and Gilbert Levitt.
			
			The author is grateful to Christopher Deninger and Linus Kramer for inviting him as a Marie Curie fellow at the \emph{Universit\"at M\"unster}, Germany, where part of this research was undertaken.
			
		\bibliographystyle{amsplain}
		\bibliography{Biblio}
\end{document}